\newcommand{\new}[1]{#1}
\newcommand{\rev}[1]{\textcolor{black}{#1}}
\newcommand*{\QED}[1][$\blacksquare$]{%
\leavevmode\unskip\penalty9999 \hbox{}\nobreak\hfill
    \quad\hbox{#1}%
}
\newcommand{\N}{\mathbb{N}}                                     
\newcommand{\R}{\mathbb{R}}                                     
\newcommand{\C}{\mathbb{C}}                                     
\newcommand{\Z}{\mathbb{Z}}                                     
\newcommand{\dd}{\mathrm{d}}                                    
\providecommand{\abs}[1]{\left\lvert #1 \right\rvert}           
\providecommand{\norm}[1]{\left\lVert #1 \right\rVert}          
\newcommand{\innerprod}[2]{\left\langle #1,\, #2 \right\rangle} 
\newcommand{\tr}[1]{\mathrm{Tr} \left( #1 \right)}              
\newcommand\restr[2]{{\left.\kern-\nulldelimiterspace #1 \vphantom{\big|} \right|_{#2}}} 
\DeclareMathOperator{\domain}{dom}
\DeclareMathOperator{\range}{range}
\DeclareMathOperator{\acc}{acc}
\DeclareMathOperator{\dist}{dist}
\DeclareMathOperator*{\esssup}{ess\,sup}
\DeclarePairedDelimiter\floor{\lfloor}{\rfloor}
\newcommand{\Ex}{\mathbb{E}}
\renewcommand{\Pr}{\mathbb{P}}
\newcommand{\sigalg}{\mathcal{F}}
\newcommand{\setM}{\mathcal{M}}
\newcommand{\law}{\mathcal{L}}
\newcommand{\Cov}{\mathrm{Cov}}
\newcommand{\bounded}[1]{\mathrm{L}(#1)}
\newcommand{\HS}[1]{S_2(#1)}
\newcommand{\lag}{\eta}
\newcommand{\inrkhs}{{\mathscr{H}}} 
\newcommand{\id}[1][]{
	\ifthenelse{\equal{#1}{}}{{I}}{{I}_{\scriptscriptstyle #1}}}
\newcommand{\idop}[1][]{
	\ifthenelse{\equal{#1}{}}{{\mathcal{I}}}{{\mathcal{I}}_{\scriptscriptstyle #1}}}
\newcommand{\cov}[1][]{C_{#1}} 
\newcommand{\ko}[1][]{
   \ifthenelse{\equal{#1}{}}{\mathcal{K}}{\mathcal{K}_{#1}}}
\newcommand{\eko}[1][]{
   \ifthenelse{\equal{#1}{}}{\widehat{\mathcal{K}}}{\widehat{\mathcal{K}}_{#1}}}
\DeclareMathOperator{\mspan}{span}
\newcommand{\emu}{\widehat{\mu}}
\newtheorem{assump}[theorem]{Assumption}
\begin{document}
  
\title{Kernel Autocovariance Operators of Stationary Processes:
Estimation and Convergence}

\author{\name Mattes Mollenhauer\textsuperscript{$1$}
        \email mattes.mollenhauer@fu-berlin.de\\
        \name Stefan Klus\textsuperscript{$3$}
        \email s.klus@hw.ac.uk\\
        \name Christof Sch\"{u}tte\textsuperscript{$1$,$2$}
        \email schuette@zib.de \\
        \name P\'{e}ter Koltai\textsuperscript{$1$}
        \email peter.koltai@fu-berlin.de\\
        \textsuperscript{$1$}\addr
        Institute of Mathematics, Freie Universität Berlin \\
        Arnimallee 6, D-14195 Berlin \\
        \textsuperscript{$2$}\addr
        Zuse Intitute Berlin \\
        Takustraße 7, D-14195 Berlin \\
        \new{%
        \textsuperscript{$3$}\addr
        School of Mathematical \& Computer Sciences,
        Heriot--Watt University \\
        Edinburgh, EH14 4AS, UK}
        }

\editor{Ingo Steinwart}
\maketitle

\begin{abstract}%
    We consider autocovariance operators
    of a stationary stochastic process on a Polish space
    that is embedded into a reproducing kernel Hilbert space.
    We investigate how empirical estimates of these operators
    converge along realizations of the process under various conditions.
    In particular, we examine ergodic and strongly mixing processes and
    obtain several asymptotic results as well as
    finite sample error bounds.
    We provide applications of our theory 
    in terms of consistency results
    for \emph{kernel PCA} with dependent data and
    the \emph{conditional mean embedding} of transition probabilities.
    Finally, we use our approach to examine
    the nonparametric estimation of Markov transition operators
    and highlight how our theory 
    can give a consistency analysis for a large family of spectral analysis methods
    including kernel-based \emph{dynamic mode decomposition}.
\end{abstract}

\begin{keywords}
    stationary time series, autocovariance operator, kernel mean embedding, 
    mixing, ergodic process
\end{keywords}

\section{Introduction}

The \emph{kernel mean embedding},
i.e., the embedding of a probability
distribution into a \emph{reproducing kernel Hilbert space}
\citep{Berlinet04:RKHS,Smola07Hilbert},
and the closely related theory of 
\emph{kernel covariance operators} 
have spawned a vast variety of 
nonparametric models and statistical tests over the last years.
For an overview of the kernel mean embedding theory, 
we refer the reader to the survey by~\citet{MFSS16} 
and the references therein.
Kernel covariance operators serve as the theoretical foundation
of several spectral analysis and component decomposition techniques including 
\emph{kernel principal component analysis}, \emph{kernel independent component analysis} and 
\emph{kernel canonical correlation analysis}.
Consistency results and the statistical analysis of these methods can therefore be directly based
on the estimation of kernel covariance operators~\citep{Blanchard2007,Fukumizu07:KCCA,RBD10}.
Moreover, kernel covariance operators and their connection to
$L^p$-space integral operators and random matrices are a
fundamental concept used to formalize
statistical learning~\citep[see for example][]{Smale2007,RBD10}.
 
In this paper, we extend the statistical theory
of kernel covariance operators from the independent scenario to 
\emph{kernel autocovariance operators}
of a stationary stochastic process (that is, kernel cross-covariance operators with 
respect to a time-lagged version of the process).
Recently, several nonparametric models
for dependent data, sequence modeling,
and time series analysis based on kernel mean embeddings have emerged.
Popular approaches include
filtering~\citep{SHSF09,Fukumizu13:KBR,Gebhardt2019Kalman},
transition models~\citep{SunEtAl2019, GrunewalderLBPGJ2012}
and reinforcement learning~\citep{VanHoof15,LeverEtAl:CCME17,VanHoof17,Stafford:ACCME18, Gebhardt2019Robot}, to only name a few.
A theoretical tool to understand these concepts is the 
kernel autocovariance operator, as its plays an important
role in the nonparametric approximation of transition probabilities.
This concept has been introduced in an operator-theoretic sense
under the name
\emph{conditional mean embedding} by~\cite{SHSF09}
under strong technical requirements
\citep[see also][]{Klebanov2019rigorous}.
These requirements have later been relaxed by developing the theory
in a vector-valued regression 
scenario~\citep{Gruen12,Park2020MeasureTheoretic, 
Mollenhauer2020Nonparametric, LiEtAl2022}.
Although time series are one of the primary fields of application,
consistency results for the empirical conditional mean embedding
have been limited to the case of independent data until now.
As an application of the results of this paper, we prove
standard consistency statements for dependent data.

Recent results ~\citep{KBSS18,Klus2019, Mollenhauer2020Nonparametric} 
show that eigenfunctions of \emph{Markov transition operators}
can be approximated with the conditional mean embedding.
In particular, it was discovered that a large family of kernel-based
spectral analysis and model order reduction techniques for stochastic processes and dynamical systems
(see \citealp{KBBP16,KNKWKSN18} and \citealp{Wu2020} for an overview)
implicitly approximate the spectral decomposition of a transition operator
defined on an RKHS.
This operator can be expressed
in terms of kernel autocovariance operators.
Different versions of these methods are popular in fluid dynamics~\citep{Schmid10,TRLBK14,WKR15,WRK15:Kernel}, signal processing~\citep{MS94}, machine learning~\citep{HZKM03,Kawahara2016,Hua2017},
and molecular dynamics~\citep{PPGDN13,SP15} under the names
\emph{dynamic mode decomposition} and \emph{time-lagged independent component analysis}.
Until now, a full statistical convergence analysis of these 
techniques has not been conducted to the best of our knowledge.
A theoretical examination of kernel autocovariance operators
contributes significantly to the understanding of kernel-based versions of the aforementioned approaches.

The theory of weakly dependent random processes taking
values in infinite-dimensional Banach spaces or Hilbert spaces
has become increasingly important especially due to applications in 
the field of \emph{functional data analysis}~\citep{HoermannKokoszka2010,HorvathKokoszka2012}.
In infinite-dimensional statistics, the estimation of covariance and cross-covariance
operators~\citep{Baker70:XCov,Baker1973} is a fundamental concept.
Under parametric model assumptions about the process, the estimation of 
covariance and autocovariance operators has been examined in various scenarios.
For autoregressive (AR) processes in Banach spaces and Hilbert spaces,
weak convergence and asymptotic normality has been established
\citep{Bosq2000,Bosq2002,Mas2002,DehlingSharipov2005, Mas2006}.
\cite{Soltani2011} add the assumption of periodic
correlation for AR processes in Hilbert spaces. 
\cite{Allam2014,Allam2019} provide rates
for almost sure convergence of covariance operators 
in Hilbert--Schmidt norm for an AR process with random
coefficients.
For processes in an $L^2$ function space, 
the weak convergence of covariance operators has been examined
by~\cite{KokoszkaReimherr2013} under the 
assumption of $L^4$-$m$ \emph{approximability}~\citep[a concept generalizing
$m$-\emph{dependence}, which includes 
certain autoregressive and nonlinear models, see][]{HoermannKokoszka2010} 
in the context of functional principal component analysis. 

In this paper, we consider
a stationary stochastic process $(X_t)_{t \in \Z}$ 
taking values in a Polish space $E$. 
Let $\inrkhs$ be a reproducing kernel Hilbert space (RKHS)
with the \emph{canonical feature map}
$\varphi: E \rightarrow \inrkhs$.
In contrast to the previously mentioned work,
we investigate autocovariance operators of the 
corresponding embedded RKHS-valued process
\begin{equation*}
\big( \varphi(X_t) \big)_{t \in \Z}.
\end{equation*} 
We face the challenge that
properties of the $E$-valued process  $(X_t)_{t \in \Z}$ 
which quantify the convergence speed of any empirical statistic 
must transfer accordingly to
the embedded version of the process in the Hilbert space $\inrkhs$.
However, we can not require
restrictive assumptions about the feature map $\varphi$
in order to ensure applicability of our results
for various RKHSs in practice.
Hence, in contrast to the previously mentioned literature, we
consider a more general setting that does not require
either $(X_t)_{t \in \Z}$ or $(\varphi(X_t))_{t \in \Z}$
to obey any specific parametric time series model.

Recently, \cite{BZ2019} derived a Bernstein-type inequality for Hilbert space
processes for a class of mixing properties called 
$\mathcal{C}$-mixing~\citep{Maume-Deschamps2006}.
As a special case,
the authors show that under 
fairly restrictive Lipschitz conditions on the feature map $\varphi$,
this mixing property is preserved under the RKHS embedding of a so-called $\tau$-mixing process.
The derived inequality is then used to obtain concentration bounds for 
the context of RKHS learning theory, including kernel covariance operator estimation 
\emph{without a time lag}.
As described for example by \cite{HangSteinwart2017}, the 
class of $\mathcal{C}$-mixing coefficients is only 
partly related to the classical strong mixing coefficients 
found in the literature~\citep{Doukhan94,Bradley2005},
which we will consider in this paper, in particular 
the concept of $\alpha$\emph{-mixing}.

The contributions of this paper are:
\begin{enumerate}[label=(\roman*)]
    \item A mathematical framework for kernel autocovariance operators
        of a stationary discrete-time process $(X_t)_{t \in \Z}$
        taking values in a Polish space.
        This framework allows the 
        investigation of ergodicity and strong mixing in the 
        context of the embedded
        process $(\varphi(X_t))_{t \in \Z}$
        under minimal requirements on the feature map $\varphi$.
        In particular, our assumptions are easily justifiable
        for further application in practical work on RKHS-based time series models.
        \newpage
    \item A collection of various asymptotic as well as nonasymptotic
        results about the estimation error
        of empirical kernel autocovariance operators
        based on single trajectories of the process $(X_t)_{t \in \Z}$.
        These results are presented
        in a form that is directly accessible for work on related topics.
    \item Applications of our results to
        \begin{enumerate}
            \item the consistency of kernel PCA with dependent data;
            \item the consistency of the conditional mean embedding
                of transition probabilities under the typical technical assumptions; and
            \item the estimation of Markov transition operators
                and their role in a family of spectral analysis methods
                for dynamical systems.
        \end{enumerate}
\end{enumerate}

This paper is structured as follows.
In Section~\ref{sec:preliminaries}, we recall the required preliminaries
from spectral theory, Bochner integration, and reproducing kernel
Hilbert spaces and formulate our working assumptions.
Section~\ref{sec:SLLN} addresses the strong law of large numbers
of empirical kernel autocovariance operators
under the hypothesis of ergodicity.
We introduce the concept of strong mixing and derive standard probabilistic
results including the central limit theorem in 
Section~\ref{sec:asymptotic_error}.
A general concentration bound for the estimation error
can be found in Section~\ref{sec:concentration_bounds}.
Based on these results,
we highlight applications to kernel PCA from dependent data
(Section~\ref{sec:kernel_pca}), the conditional mean embedding
(Section~\ref{sec:CME}) and the approximation of
Markov transition operators (Section~\ref{sec:markov_operators}).
We conclude our work in Section~\ref{sec:conclusion}.

\section{Preliminaries}
\label{sec:preliminaries}
\subsection{General Notation}

We give an overview of our notation and collect
well-known facts from operator theory and probability theory.
For details, we refer the reader to~\citet{Reed} and~\citet{Kallenberg}.
In what follows, we write $B$ for a separable real Banach space 
with norm $\norm{\cdot}_B$, and $H$ for a separable real Hilbert space with inner product
$\innerprod{\cdot}{\cdot}_H$. 
$\bounded{B}$ stands for the Banach space of bounded 
linear operators on $B$ equipped with the operator norm $\norm{\cdot}$.
The expression $H \otimes H$
denotes the tensor product space:
$H \otimes H$ is the Hilbert space completion 
of the algebraic tensor product with respect to the
inner product 
$\innerprod{x_2 \otimes x_1}{x_2' \otimes x_1'}_{H \otimes H} = 
\innerprod{x_1}{x_1'}_H \innerprod{x_2}{x_2'}_H$. 
For $x_1,x_2 \in H$, we interpret the element $x_2 \otimes x_1 \in H \otimes H$ 
as the linear
rank-one operator $x_2 \otimes x_1 \colon H \rightarrow H$
defined by $x \mapsto \innerprod{x}{x_1}_H x_2$.
Whenever $(e_i)_{i \in I}$ is a complete orthonormal system (CONS) in $H$,
$(e_i \otimes e_j)_{i,j \in I}$ is a CONS in $H \otimes H$.
Thus, when $H$ is separable, $H \otimes H$ is separable.

Every compact operator $A$ on $H$ admits a \emph{singular value decomposition},
that is, there exist orthonormal systems $\{u_i\}_{i\in J}$ and $\{v_i\}_{i\in J}$ in $H$ such that
\begin{equation} \label{eq:SVD}
    A = \sum_{i \in J} \sigma_i(A) u_i \otimes v_i,
\end{equation}
where $(\sigma_i(A))_{i \in J}$ are the strictly positive and nonincreasingly ordered 
(including multiplicities) singular values of $A$ with an (either
countably infinite or finite) index set $J$.
The convergence in~\eqref{eq:SVD} is meant with respect to the operator norm.  
The \emph{rank} of $A$ is defined as the cardinality of $J$. 

For $1 \leq p \leq \infty$, the \emph{p-Schatten class} $S_p(H)$ consists 
of all compact operators $A$ on $H$ such that the
norm $\norm{A}_{S_p(H)} := \norm{(\sigma_i(A))_{i \in J}}_{\ell_p}$ is finite.
Here $\norm{(\sigma_i(A))_{i \in J}}_{\ell_p}$ denotes the $\ell_p$ 
sequence space norm of the sequence of singular values.
The spaces $S_p(H)$ are two-sided ideals in $L(H)$. Moreover $ \norm{A} \leq \norm{A}_{S_q(H)} \leq \norm{A}_{S_p(H)}$
holds for $p \leq q$, i.e., $S_p(H) \subseteq S_q(H)$.
For $p=2$, we obtain the Hilbert space of \emph{Hilbert--Schmidt operators}
on $H$ equipped with the inner product $\innerprod{A_1}{A_2}_{\HS{H}} = \tr{A_1^*A_2}$.
For $p=1$, we obtain the Banach algebra
of \emph{trace class operators}. For $p=\infty$, we obtain the Banach algebra of
compact operators equipped with the operator norm $\norm{A} = \norm{A}_{S_\infty(H)}$. The Schatten classes are the completion
of finite-rank operators (i.e., operators in 
$\mspan\{ x \otimes x' \mid x,x' \in H \}$) 
with respect to the corresponding norm. 

We will make frequent use of the fact 
that the tensor product space $H \otimes H$ can be
isometrically identified with the space of Hilbert--Schmidt
operators on $H$, i.e.,
we have $\HS{H} \simeq H \otimes H$. For elements
$x_1, x_1', x_2, x_2' \in H$, we have the relation
$\innerprod{x_2 \otimes x_1}{x_2' \otimes x_1'}_{H \otimes H}
= \innerprod{x_2 \otimes x_1}{x_2' \otimes x_1'}_{\HS{H}}$,
where the tensors are interpreted as rank-one operators 
as described above. This property extends
to $\mspan\{ x \otimes x' \mid x,x' \in H \}$
by linearity and defines a linear isometric isomorphism
between $H \otimes H$ and $\HS{H}$, which can be seen by
considering Hilbert--Schmidt operators in terms of 
their singular value decompositions.

For any topological space $E$, we will write $\sigalg_E = \mathcal{B}(E)$ for its associated
Borel field.
For any collection of sets $\setM$,  $\sigma(\setM)$ denotes
the intersection of all $\sigma$-fields containing $\setM$.
For any $\sigma$-field $\sigalg$ and countable index set $I$, 
we write $\sigalg^{\otimes I}$ as the product $\sigma$-field (i.e., the smallest
$\sigma$-field with respect to which all coordinate projections on $E^I$ are measurable).
Note that when $E$ is Polish (i.e., separable and completely metrizable), we have $\mathcal{B}(E^I) = \mathcal{B}(E)^{\otimes I}$, i.e. the Borel field
on the product space generated by the
product topology and the product of the individual Borel fields are equal.
Put differently, the Borel field operator and the product field operator
are compatible with respect to 
product spaces~\citep[][Proposition 4.1.17]{Dudley}.
Moreover, $E^I$ equipped with the product topology is Polish.

In this paper, we will consider a stochastic process $(X_t)_{t \in \Z}$ on 
a probability space $(\Omega, \sigalg, \Pr)$ with values in the observation space
$(E, \sigalg_E)$, where we assume $E$ to be Polish. 
For a finite number of random variables $\xi_1, \dots, \xi_n$ defined
on $(\Omega, \sigalg, \Pr)$ with values in $E$,
we write $\law(\xi_1, \dots, \xi_n)$ for the \emph{finite-dimensional law},
i.e., the \emph{pushforward measure} on $(E^n, \mathcal{B}(E^n))$ induced
by $\xi_1, \dots, \xi_n$.

\begin{assump}[Stationarity]
    We assume that the process $(X_t)_{t \in \Z}$ is stationary in the sense that
    all finite-dimensional laws are identical, that is
    \begin{equation*}
    \law(X_{t_1}, \dots , X_{t_r}) = \law(X_{t_1 + \lag}, \dots , X_{t_r + \lag})
    \end{equation*}
    for all $t_1, \dots, t_r \in \Z$, $r \in \N$, and time lags $\lag \in \N$. 
\end{assump}

For any separable Banach space $B$, let $L^p(\Omega, \sigalg, \Pr; B)$ denote the
space of strongly $\sigalg-\sigalg_B$ measurable
and Bochner $p$-integrable functions $f \colon \Omega \rightarrow B$
for $ 1 \leq p \leq \infty$~\citep[see for example][]{DiestelUhl1977}. 
In the case of $B = \R$, we simply write $L^p(\Omega, \sigalg, \Pr; \R) = L^p(\Pr)$ for the standard space of real-valued Lebesgue $p$-integrable functions.

\subsection{Reproducing Kernel Hilbert Spaces} \label{sec:RKHS}
We will briefly introduce the concept of reproducing kernel Hilbert spaces. For a detailed discussion
of this topic, we refer the reader to \cite{Berlinet04:RKHS}, \citet{StCh08}
and \citet{SaitohSawano2016}.
To distinguish standard Hilbert spaces from reproducing kernel Hilbert spaces,
we will use the script letter $\inrkhs$ for the latter.

\begin{definition}[Reproducing kernel Hilbert space] \label{def:RKHS}
Let $E$ be a set and $ \mathscr{H} $ a space of functions from $E$ to $\R$. 
Then $ \mathscr{H} $ is called a \emph{reproducing kernel Hilbert space (RKHS)} 
with corresponding inner product $ \innerprod{\cdot}{\cdot}_\mathscr{H} $ if 
there exists function $ k \colon E \times E \to \R $ such that
    \begin{enumerate}[label=(\roman*), itemsep=0ex, topsep=1ex]
        \item $ \innerprod{f}{k(x, \cdot)}_\mathscr{H} = f(x) $ for all $ f \in \mathscr{H} $ (reproducing property), and
        \item $ \mathscr{H} = \overline{\mspan\{k(x, \cdot) \mid x \in E \}}$, where
        the completion is with respect to the RKHS norm.
    \end{enumerate}
    We call $k$ the \emph{reproducing kernel} of $\mathscr{H}$.
\end{definition}

It follows in particular that $ k(x, x^\prime) = \innerprod{k(x, \cdot)}{k(x^\prime,\cdot)}_\mathscr{H} $. The \emph{canonical feature map} $ \varphi \colon E \to \mathscr{H} $ is given by $ \varphi(x) := k(x, \cdot) $. Thus, we obtain $ k(x, x^\prime) = \innerprod{\varphi(x)}{\varphi(x^\prime)}_\mathscr{H} $. Every RKHS has a unique symmetric and positive semi-definite kernel $ k $ with the reproducing property. 
Conversely, every symmetric positive semi-definite kernel $ k $ induces a unique RKHS with $ k $ as its reproducing kernel.
In what follows, we will use the term 
\emph{kernel} synonymously for reproducing kernel/symmetric
positive semi-definite kernel for brevity.

We now impose a few restrictions on the considered RKHS, which we assume
to be fulfilled for the remainder of this paper.

\begin{assump}[Separability] \label{ass:separability}
    The RKHS $\mathscr{H}$ is separable. Note that for a Polish space $E$, 
    the RKHS induced by a continuous kernel $k \colon E \times E \rightarrow \R$ is 
    always separable~\citep[see][Lemma 4.33]{StCh08}.
    For a more general treatment of conditions implying separability, see~\cite{OwhadiScovel2017}.
\end{assump}

\begin{assump}[Measurability] \label{ass:measurability}
    The canonical feature map $\varphi \colon E \rightarrow \inrkhs$ is $\sigalg_E-\sigalg_\inrkhs$ measurable.
    This is the case when $k(x,\cdot) \colon E \rightarrow \R$
    is $\sigalg_E-\sigalg_\R$ measurable for all $x \in E$.
    If this condition holds, then additionally all functions $f \in \inrkhs$ are $\sigalg_E-\sigalg_\R$ measurable 
    and $k \colon E \times E \rightarrow \R$ is $\sigalg_E^{\otimes 2}-\sigalg_\R$ measurable~\citep[see][Lemmas 4.24 and 4.25]{StCh08}.
\end{assump}

\begin{assump}[Existence of second moments] \label{ass:second_moment}
    We have $\varphi(X_0) \in L^2(\Omega, \sigalg, \Pr, \inrkhs)$.
    Note that this is trivially the case whenever $\sup_{x \in E} k(x,x) < \infty$.
\end{assump}

\subsection{Kernel Mean Embeddings and Kernel Covariance Operators}
\label{sec:covariance_operators}

We now introduce kernel mean embeddings and kernel covariance operators,
which are simply the Bochner expectations and covariance operators
of RKHS-embedded random variables.

For a random variable $X$ on $E$ satisfying 
$\varphi(X) \in L^1(\Omega,\sigalg,\Pr;\inrkhs)$,
we call
\begin{equation*}
    \mu_X := \mu_{\law(X)} := \Ex[\varphi(X)] \in \inrkhs
\end{equation*}
the \emph{kernel mean embedding} or simply 
\emph{mean embedding}~\citep{Berlinet04:RKHS,Smola07Hilbert,MFSS16} of $X$.
For every $f \in \inrkhs$, the mean embedding satisfies $\Ex[f(X)] = \innerprod{f}{\mu_X}_\inrkhs$.

\begin{definition}[Kernel (cross-)covariance operator]
For two random variables $X,Y$ on $E$ satisfying
$\varphi(X), \varphi(Y) \in L^2(\Omega,\sigalg,\Pr;\inrkhs)$, we call
the trace class operator $C_{YX} \colon \inrkhs \rightarrow \inrkhs$
defined by
\begin{equation*}
    C_{YX} = \Ex[(\varphi(Y) - \mu_Y) \otimes (\varphi(X) - \mu_X)]
\end{equation*}
the \emph{kernel cross-covariance operator} of
$X$ and $Y$.
We call $C_{XX}$ the \emph{kernel covariance operator}
of $X$.
\end{definition}

For all $f,g \in \inrkhs$, we have $\Cov[f(X),g(Y)] = \innerprod{g}{C_{YX}f}_\inrkhs$ 
as well as $C_{YX} = C_{XY}^*$.
As a consequence, $C_{XX}$ is self-adjoint,
positive semi-definite and trace class.
For additional information about
(cross-)covariance operators of 
Hilbertian random variables, see for example~\citet{Parthasarathy1967},
\citet{Baker70:XCov} and \citet{Baker1973}.

In the literature, the covariance operator is sometimes used as a generalization 
of the uncentered second moment and therefore defined
without centering of the random variables $\varphi(X)$ 
and $\varphi(Y)$~\citep{Prokhorov1956,Parthasarathy1967,Bharucha-Reid1972, Fukumizu13:KBR}.

\begin{definition}[Kernel autocovariance operator]
Let $(X_t)_{t \in \Z}$ be a stationary stochastic process
with values on $E$ such that 
$\varphi(X_0) \in L^2(\Omega,\sigalg,\Pr;\inrkhs)$.
Let $\lag \in \N$.
We call 
\begin{equation*}
	C(\lag) := C_{X_\lag X_0} =  C_{X_{t + \lag} X_{t}}
\end{equation*}
the \emph{kernel autocovariance operator} of the process $(X_t)_{t \in \Z}$ with
respect to the time lag $\lag$.
\end{definition}

\subsection{Product Kernels and Hilbert--Schmidt Operators}
\label{sec:product_kernels}

The tensor product space $\inrkhs \otimes \inrkhs \simeq S_2(\inrkhs)$ 
is itself an RKHS with the corresponding
canonical feature map $	\varphi \otimes \varphi \colon E \times E \rightarrow \inrkhs \otimes \inrkhs$
given by
\begin{equation*}
	\varphi \otimes \varphi\, (x_1,x_2) := \varphi(x_1) \otimes \varphi(x_2).
\end{equation*}
For more details, we refer the reader to \citet[][Lemma 4.6]{StCh08}.
The corresponding kernel of $\inrkhs \otimes \inrkhs$
is the product kernel 
$k \cdot k \colon E^2 \times E^2 \rightarrow \R$.

The estimation of the uncentered kernel autocovariance
operator can therefore be interpreted as the estimation of a kernel mean on
the product RKHS $\inrkhs \otimes \inrkhs$.
In particular, we can write $\cov(\lag)$ as the kernel mean embedding 
of the joint distribution
$\law(X_\lag, X_0)$ on the measurable space $(E \times E, \sigalg_{E} \otimes \sigalg_{E})$
using the product feature map $\varphi \otimes \varphi$.
That is, we have
\begin{equation*}
     \mu_{\law(X_\lag, X_0)} = \Ex[\varphi(X_\lag) \otimes \varphi(X_0)],
\end{equation*} which is exactly the uncentered kernel autocovariance operator of $X$.

Thus, the analysis of the estimation of the uncentered
autocovariance operator covers the problem of estimating 
the kernel mean $\mu_{\law(X_0)}$ of the marginal $\law(X_0)$.
In fact, by considering kernels on the product space $E \times E$ instead
of $E$, we need to account for the challenge that appropriate statistical 
properties of the process 
$(X_t)_{t \in \Z}$ (such as ergodicity, mixing, or decay of correlations) 
have to transfer to the product process $(X_{t + \lag}, X_t)_{t \in \Z}$ on $E \times E$
in order to provide results. 
We may therefore concentrate directly on the estimation
of uncentered autocovariance operators based on $E \times E$ instead on the
estimation of kernel mean embeddings on $E$. We emphasize 
that all of our results directly transfer to
the much simpler case of estimating the kernel mean $\mu_{\law(X_0)}$ 
from dependent data by simply replacing the
product space $E \times E$ with $E$ and the product feature map $\varphi \otimes \varphi$
with $\varphi$.

\begin{assump}[Centered process]
    \label{ass:centered}
    Without loss of generality, we assume that
    the embedded process is centered,
    i.e., $\mu_{\law(X_0)} = \Ex[\varphi(X_0)] = 0$.
    In this case, the centered and the uncentered autocovariance operator
    coincide:
    \begin{equation*}
    C(\lag) = \Ex[\varphi(X_\lag) \otimes \varphi(X_0)].
    \end{equation*}
\end{assump}

\new{%
Whenever Assumption~\ref{ass:centered} is not satisfied,
the centered autocovariance operator
can be computed by replacing the random variables
$\varphi(X_0)$ and $\varphi(X_\lag)$
by their centered counterparts
$\varphi(X_0) - \mu_{\law(X_0)}$ and
$\varphi(X_\lag) - \mu_{\law(X_0)}$, respectively.
It is important to note that unlike $\varphi(\cdot)$,
the expression $\varphi(\cdot) - \mu_{\law(X_0)}$ is technically not the canonical
feature map of $\inrkhs$ and should be treated with caution.
In particular, it does not satisfy the reproducing property.
It does however
satisfy all conditions required to formally define its mean embedding
and the corresponding covariance operators.
Furthermore, in practical applications, the mean embedding
$\mu_{\law(X_0)}$ is typically not known and replaced by an 
empirical estimate.
This centering and its consequences
regarding the empirical estimation of kernel covariance operators
are investigated in detail by \citet{Blanchard2007}.
}

In what follows, we will repeatedly use the shorthand
\begin{align*}
C_n(\lag) &:= \frac{1}{n} \sum_{t = 1}^n \varphi(X_{t + \lag}) \otimes \varphi(X_t)
\end{align*}
for the empirical estimate of $C(\lag)$ based on $n + \lag$ consecutive time steps
of the process $(X_t)_{t \in \Z}$.

\section{Strong Law of Large Numbers}
\label{sec:SLLN}

We now address the strong law of large numbers for the estimator $C_n(\lag)$.
To this end, we briefly introduce the concept of 
measure-preserving dynamical systems
and ergodicity. For details, the reader may refer for example 
to~\cite{Petersen1983}. 
\new{
It is well-known that every stationary process
can be expressed in terms of a measure-preserving dynamical system
when the underlying probability space is chosen 
accordingly~(see for example \citealp{Doob1953}, Chapter X).
This allows to study stationary processes with tools from
ergodic theory. 
We will briefly describe this viewpoint below.
}

It is possible to assume without loss of generality that the 
underlying probability space  $(\Omega, \sigalg, \Pr)$
describing $(X_t)_{t \in \Z}$ is the \emph{canonical probability space}, i.e.,
$\Omega := E^\Z$ and $\sigalg := \sigalg_E^{\otimes \Z}$.
In this case, we can simply express the process $(X_t)_{t \in \Z}$
as the family of \emph{coordinate projections} on $\Omega$: 
for $\omega = (\omega_t)_{t \in \Z} \in \Omega$,
we can write 
\begin{equation*}
    X_t(\omega):= \omega_t = X_0(T^t \omega) \quad \textnormal{for all } t \in \Z,
\end{equation*}
where $T$ is the \emph{left-shift operator} 
on $\Omega$ defined by $(T\omega)_i = \omega_{i+1}$ for all $i \in \Z$. 
Note that by stationarity of
$(X_t)_{t \in \Z}$, the shift $T$ is measure preserving in the sense that
$ \Pr[T^{-1}M]= \Pr[M]$ for all $M \in \sigalg^{\otimes\Z}_E$. 
We call $(X_t)_{\in \Z}$ \emph{ergodic}
whenever $T$ is \emph{ergodic in the measure theoretical sense},
i.e., for all sets $M \in \sigalg_E^{\otimes \Z}$, we have that 
the condition $ T^{-1}M = M$ implies either
$\Pr[M] = 0$ or $\Pr[M]=1$.

We show that for any fixed time lag $\lag \in \N$, the kernel 
autocovariance operator $\cov(\lag)$ can be
estimated almost surely from realizations of $(X_t)_{t \in \Z}$
whenever the process is ergodic.
This comes as a natural consequence of the following 
generalized version of Birkhoff's ergodic theorem.

\begin{theorem}[\citealp{BeckSchwarz1957}, Theorem 6] \label{thm:beck-schwartz}
    Let $B$ be a reflexive Banach space and $T$ an ergodic measure-preserving transformation
    on $(\Omega, \sigalg, \Pr)$. Then for each $f \in L^1(\Omega, \sigalg, \Pr; B)$,
    \begin{equation*}
    \lim_{n \to \infty} \frac{1}{n} \sum_{i = 1}^{n} f(T^i \omega ) = \Ex[f],
    \end{equation*}
    where the convergence holds $\Pr$-a.e.\ with respect to $\norm{\cdot}_B$.
\end{theorem}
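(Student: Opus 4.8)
Since this result is quoted verbatim from \citet{BeckSchwarz1957}, no proof is needed in the paper itself; were one to reconstruct it, the plan would be the classical three-step reduction of the vector-valued ergodic theorem to the scalar Birkhoff theorem. First I would use that $f$, being Bochner integrable, is essentially separably valued, so one may assume $B$ separable, and then settle the simple-function case: for $f = \sum_{j=1}^m b_j \mathbf{1}_{A_j}$ with $b_j \in B$, $A_j \in \sigalg$, linearity gives $\tfrac1n\sum_{i=1}^n f(T^i\omega) = \sum_{j=1}^m b_j\,\big(\tfrac1n\sum_{i=1}^n \mathbf{1}_{A_j}(T^i\omega)\big)$, and the scalar Birkhoff ergodic theorem together with ergodicity of $T$ sends each inner Ces\`aro average $\Pr$-a.e.\ to the constant $\Pr[A_j] = \Ex[\mathbf{1}_{A_j}]$; a finite sum of $\Pr$-a.e.\ convergent terms converges $\Pr$-a.e.\ in $\norm{\cdot}_B$ to $\sum_j b_j\,\Pr[A_j] = \Ex[f]$.

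Second, I would extend to arbitrary $f \in L^1(\Omega,\sigalg,\Pr;B)$ using density of simple functions in $L^1(\Omega,\sigalg,\Pr;B)$ together with Hopf's maximal ergodic inequality $\Pr[Mg > \lambda] \le \lambda^{-1}\norm{g}_{L^1(\Pr)}$ for $g \ge 0$, where $Mg(\omega) = \sup_{n\ge1}\tfrac1n\sum_{i=1}^n g(T^i\omega)$. Writing $A_n(\omega) = \tfrac1n\sum_{i=1}^n f(T^i\omega)$ and $\mathrm{Osc}(f)(\omega) = \limsup_{n,n'\to\infty}\norm{A_n(\omega) - A_{n'}(\omega)}_B$, for a simple $h$ with $\norm{f-h}_{L^1(\Omega,\sigalg,\Pr;B)} < \varepsilon$ one has the pointwise estimate $\mathrm{Osc}(f) \le \mathrm{Osc}(h) + 2\,M(\norm{f-h}_B)$; since $\mathrm{Osc}(h) = 0$ $\Pr$-a.e.\ by the first step, the maximal inequality yields $\Pr[\mathrm{Osc}(f) > \lambda] \le 2\varepsilon/\lambda$ for all $\lambda > 0$. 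Letting $\varepsilon \downarrow 0$ along a sequence and taking a countable union of null sets forces $\mathrm{Osc}(f) = 0$ $\Pr$-a.e., and completeness of $B$ then gives $\norm{\cdot}_B$-convergence $A_n(\omega) \to \bar f(\omega)$ for $\Pr$-a.e.\ $\omega$ to some limit $\bar f$.

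Third, I would identify $\bar f$. For every $x^* \in B^*$ the scalar function $x^*\circ f$ lies in $L^1(\Pr)$, so scalar Birkhoff and ergodicity give $\tfrac1n\sum_{i=1}^n x^*(f(T^i\omega)) \to \Ex[x^*\circ f] = x^*(\Ex[f])$ $\Pr$-a.e., whence $x^*(\bar f(\omega)) = x^*(\Ex[f])$; evaluating against a countable total subset of $B^*$ (available since $B$ is separable) yields $\bar f(\omega) = \Ex[f]$ $\Pr$-a.e. This is also the step where the reflexivity hypothesis can be invoked instead: the sequence $(A_n(\omega))_n$ is $\Pr$-a.e.\ norm-bounded in $B$ because $\tfrac1n\sum_{i=1}^n\norm{f(T^i\omega)}_B \to \Ex[\norm{f}_B]$ $\Pr$-a.e., so reflexivity supplies weakly convergent subsequences, all of whose limits must equal $\Ex[f]$ by the previous line. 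I expect the main obstacle to be the maximal ergodic inequality (Hopf's lemma) and the careful bookkeeping of exceptional null sets in the $\varepsilon \downarrow 0$ passage; once those are in place, everything else is a routine layer over the scalar Birkhoff theorem, which is why in the paper it suffices simply to cite \citet{BeckSchwarz1957}.
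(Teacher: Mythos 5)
Your reconstruction is correct, and there is nothing in the paper to compare it against: Theorem~\ref{thm:beck-schwartz} is quoted from \citet{BeckSchwarz1957} and used as a black box, so you are supplying a proof the paper omits. Your three-step argument --- simple functions via the scalar Birkhoff theorem, extension by density of simple functions together with the weak-$(1,1)$ maximal ergodic inequality applied to $\norm{f-h}_B$, and identification of the limit through a countable separating subset of $B^*$ (available after restricting to the essential separable range of $f$) --- is the standard proof of the vector-valued pointwise ergodic theorem for a single measure-preserving transformation (see, e.g., Krengel, \emph{Ergodic Theorems}, Ch.~4), and the oscillation bound $\mathrm{Osc}(f)\le \mathrm{Osc}(h)+2M(\norm{f-h}_B)$ with the $\varepsilon\downarrow 0$ bookkeeping is carried out correctly. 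Notably, your argument nowhere uses reflexivity of $B$: that hypothesis is what Beck and Schwartz need for their \emph{random} ergodic theorem, whereas for a fixed deterministic $T$ the conclusion holds in an arbitrary Banach space. This is not merely cosmetic for the paper, since the remark following Corollary~\ref{cor:SLLN} leaves open whether reflexivity is necessary and therefore stops at convergence in $S_p(\inrkhs)$ for $p\ge 2$; your proof shows the ergodic averages in fact converge $\Pr$-a.e.\ in the trace norm $S_1(\inrkhs)$ as well. One small blemish in an aside: invoking reflexivity ``instead'' at the identification step does not actually buy anything, because the null set in $x^*(\bar f(\omega))=x^*(\Ex[f])$ still depends on $x^*$, so a countable separating family (hence separability) is still needed, and weak subsequential compactness is redundant once Step~2 has already produced a norm limit. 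This does not affect the validity of the main argument.
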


We can directly apply this result to obtain almost sure convergence
of the empirical estimate of $\cov(\lag)$ in the case that $(X_t)_{t \in \Z}$ is ergodic.

\begin{corollary}[Strong consistency] \label{cor:SLLN}
    Let $(X_t)_{t \in \Z}$ be a stationary and ergodic process defined on $(\Omega, \sigalg, \Pr)$ with
    values in a Polish space $E$. Then 
     \begin{equation*}
    \lim_{n \to \infty} C_n(\lag) = \cov(\lag),
    \end{equation*}
    where the convergence is $\Pr$-a.e.\ with respect to $\norm{\cdot}_{\HS{\inrkhs}}$.
\end{corollary}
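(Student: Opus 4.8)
The plan is to reduce the statement to a direct application of the Beck--Schwarz ergodic theorem (Theorem~\ref{thm:beck-schwartz}) on the separable Hilbert space $\HS{\inrkhs} \simeq \inrkhs \otimes \inrkhs$, which is reflexive. The key observation is that the empirical estimator $C_n(\lag) = \frac1n \sum_{t=1}^n \varphi(X_{t+\lag}) \otimes \varphi(X_t)$ is exactly a Birkhoff average of the single function
\begin{equation*}
    f \colon \Omega \to \inrkhs \otimes \inrkhs, \qquad f(\omega) := \varphi(X_\lag(\omega)) \otimes \varphi(X_0(\omega)),
\end{equation*}
evaluated along the shift orbit. Indeed, using $X_t(\omega) = X_0(T^t\omega)$ and $X_{t+\lag}(\omega) = X_\lag(T^t\omega)$, we get $f(T^t\omega) = \varphi(X_{t+\lag}(\omega)) \otimes \varphi(X_t(\omega))$, so that $\frac1n\sum_{t=1}^n f(T^t\omega) = C_n(\lag)(\omega)$. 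By hypothesis $T$ is ergodic and measure-preserving, so the theorem applies verbatim once we check $f \in L^1(\Omega,\sigalg,\Pr;\inrkhs\otimes\inrkhs)$, and it yields $C_n(\lag) \to \Ex[f] = \Ex[\varphi(X_\lag)\otimes\varphi(X_0)] = C(\lag)$ almost surely in $\norm{\cdot}_{\HS{\inrkhs}}$, where the last equality is the centering Assumption and the definition of $C(\lag)$ together with $\HS{\inrkhs}\simeq\inrkhs\otimes\inrkhs$.

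The steps I would carry out, in order: (1) set up the canonical probability space and recall that $T$ is measure-preserving (by stationarity) and ergodic (by hypothesis), as already noted in the text; (2) verify that $f$ is strongly $\sigalg$--$\sigalg_{\inrkhs\otimes\inrkhs}$ measurable --- this follows from Assumption~\ref{ass:measurability} (so $\varphi$ is measurable), the measurability of the bilinear tensor map $(a,b)\mapsto a\otimes b$ from $\inrkhs\times\inrkhs$ into $\inrkhs\otimes\inrkhs$, and separability (Assumption~\ref{ass:separability}); (3) verify Bochner integrability: $\norm{f(\omega)}_{\HS{\inrkhs}} = \norm{\varphi(X_\lag(\omega))\otimes\varphi(X_0(\omega))}_{\HS{\inrkhs}} = \norm{\varphi(X_\lag(\omega))}_\inrkhs \norm{\varphi(X_0(\omega))}_\inrkhs$, so by Cauchy--Schwarz $\Ex\norm{f} \le \big(\Ex\norm{\varphi(X_\lag)}_\inrkhs^2\big)^{1/2}\big(\Ex\norm{\varphi(X_0)}_\inrkhs^2\big)^{1/2}$, which is finite by stationarity and Assumption~\ref{ass:second_moment}; (4) invoke reflexivity of $\HS{\inrkhs}$ (it is a Hilbert space, hence reflexive) and apply Theorem~\ref{thm:beck-schwartz} to $f$; (5) identify the limit $\Ex[f]$ with $C(\lag)$ via the centering assumption.

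There is no serious obstacle here; the result is essentially a packaging of Birkhoff's theorem. The only point requiring a little care is the measurability and integrability of the tensor-valued function $f$ (step~2--3), i.e.\ confirming that the rank-one tensor map interacts well with Bochner measurability on the separable Hilbert space $\inrkhs\otimes\inrkhs$; this is routine given the standing assumptions but is worth stating explicitly since the whole point of the paper's framework is to transfer structure from $(X_t)$ on $E$ to the embedded process. One might also remark that the same argument, with $\varphi\otimes\varphi$ replaced by $\varphi$ and $E\times E$ by $E$, recovers the strong law for the empirical kernel mean embedding $\frac1n\sum_{t=1}^n\varphi(X_t)\to\mu_{\law(X_0)}$, and that ergodicity of $(X_t)_{t\in\Z}$ automatically passes to the lag-pair process $(X_{t+\lag},X_t)_{t\in\Z}$ since that process is a factor (a measurable image commuting with the shift) of the original one.
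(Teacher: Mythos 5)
Your proposal is correct and follows essentially the same route as the paper: both identify $C_n(\lag)$ as a Birkhoff average of the single observable $\omega \mapsto \varphi(X_\lag(\omega)) \otimes \varphi(X_0(\omega))$ along the shift orbit, verify that this observable lies in $L^1(\Omega,\sigalg,\Pr;\HS{\inrkhs})$ using Assumptions~\ref{ass:measurability} and~\ref{ass:second_moment}, and apply Theorem~\ref{thm:beck-schwartz} on the reflexive space $\HS{\inrkhs}$. Your Cauchy--Schwarz verification of Bochner integrability is in fact slightly more explicit than the paper's, which simply asserts the $L^1$ membership of the product feature map.
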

\begin{proof}
    The time-lagged product process $(X_t, X_{t + \lag})_{t \in \Z}$
    on $E \times E$ can be expressed via the projection tuple 
    $(X_t, X_{t + \lag })(\omega) = (X_0, X_\lag)(T^t \omega)$.
    By construction, $(X_0, X_\lag)$ is $\Pr - \sigalg_{E} \otimes \sigalg_{E}$ measurable.
    Note that because of 
    Assumption~\ref{ass:measurability} and Assumption~\ref{ass:second_moment} 
    the product feature map 
    $\varphi \otimes \varphi$ given by $(x,y) \mapsto \varphi(y) \otimes \varphi(x)$ is an element of
    $L^1(E \times E, \sigalg_{E} \otimes \sigalg_{E}, \law(X_0, X_\lag); \HS{\inrkhs})$, 
    where $\HS{\inrkhs}$ is clearly reflexive.
    Therefore, it holds that the composition 
    \begin{equation*}
    \varphi \otimes \varphi \circ (X_0, X_\lag) \colon \Omega \rightarrow \HS{\inrkhs}
    \end{equation*}
    given by $\omega \mapsto (X_0, X_\lag)(\omega) \mapsto \varphi(X_\lag) \otimes \varphi(X_0)(\omega)$
    is an element of $L^1(\Omega, \sigalg, \Pr; \HS{\inrkhs})$.
    
    The statement follows immediately from the fact that 
    we choose $\varphi \otimes \varphi \circ (X_0, X_\lag)$ as the observable $f$ in
    Theorem~\ref{thm:beck-schwartz} and obtain
    \begin{equation*}
    \lim_{n \to \infty} \frac{1}{n} \sum_{t = 1}^{n} \varphi \otimes \varphi \circ (X_0, X_\lag) \circ T^t =
    \lim_{n \to \infty} \frac{1}{n} \sum_{t = 1}^{n} \varphi(X_{t + \lag}) \otimes \varphi(X_t) = \cov(\lag),
    \end{equation*}
    where the convergence is $\Pr$-a.e.\ in $\HS{\inrkhs}$.
\end{proof}

\begin{remark}[Convergence in Schatten norms]
    Corollary~\ref{cor:SLLN} also yields $\Pr$-a.e.\ convergence $C_n(\lag) \to C(\lag)$ 
    in $S_p(\inrkhs)$ for all $p \geq 2$,
    \new{%
     as it holds that
    $\norm{\cdot}_{S_p(\inrkhs)} \leq \norm{\cdot}_{S_2(\inrkhs)}$.}
    Note that $S_1(\inrkhs)$ is reflexive if and only if $\inrkhs$ is 
    finite-dimensional~\citep[see for example][Theorem 3.2]{Simon2005:Trace}.
    However, in the finite-dimensional case, all Schatten classes coincide and the question
    for convergence in Schatten norms becomes trivial.
    In the general case, it is not clear whether the 
    reflexivity assumption in Theorem~\ref{thm:beck-schwartz}
    is not only sufficient but also necessary for a convergence to hold.
    To the best of our knowledge, no stronger generalization results
    of Birkhoff's ergodic theorem for Banach-valued random variables exist.
\end{remark}

\section{Asymptotic Error Behavior}
\label{sec:asymptotic_error}

\new{%
In this section, we apply
limit theorems for weakly dependent Hilbertian random variables
to the estimation error $\cov[n](\lag)- \cov(\lag)$ and investigate its asymptotic
behaviour.
}

\subsection{$\alpha$-Mixing}
We first recall the basic notions of
$\alpha$-mixing in 
statistics~\citep[see for example][]{Bradley2005}. 

\begin{definition}[$\alpha$-mixing coefficient]
\label{defi:mixing}
For $\sigma$-fields $\sigalg_1$, $\sigalg_2 \subseteq \sigalg$, we define
\begin{equation*}
	\alpha(\sigalg_1,\sigalg_2) := \sup_{A \in \sigalg_1, B \in \sigalg_2} 
	\abs{\Pr[A \cap B] - \Pr[A]\Pr[B]}.
\end{equation*}
For a process $(X_t)_{t \in \Z}$, we furthermore define
\begin{equation*}
    \label{eq:mixing}
  	\alpha(n) := \alpha((X_t)_{t \in \Z}, n) :=
    \begin{cases}
        \new{%
        \sup\limits_{j \in \Z} \, 
    \alpha (\sigalg_{-\infty}^{j},\sigalg_{j+n}^{\infty})}, \quad &\new{n \geq 1,}
        \\
    \new{1/4}, \quad  &\new{n < 1, }
    \end{cases}
\end{equation*}
where $\sigalg_l^m := \sigma(X_t, l \leq t \leq m)$ denotes
the $\sigma$-field generated by the process $(X_t)_{t \in \Z}$ for time horizons
$- \infty \leq l \leq m \leq \infty $.
\end{definition}

The process is called $\alpha$-\emph{mixing} or \emph{strongly mixing},
when $\alpha(n) \to 0$ as $n \to \infty$.
In this case, the convergence rate of $\alpha(n)$ 
is called the \emph{mixing rate} of the associated process.
In this paper, we will not focus on the various 
alternative strong mixing coefficients which are frequently used
in statistics~\citep{Doukhan94,Bradley2005, Rio2017Asymptotic}, 
since $\alpha$-mixing is 
the weakest concept among the strong mixing coefficients and covers a wide range
of processes in practice.
\new{%
The coefficient 
$\alpha(n)$ is typically defined only for $n \geq 1$
in the literature. However, since it
always holds that $0 \leq \alpha(\sigalg_1, \sigalg_2) \leq 1/4$,
our definition above extends $\alpha(n)$  trivially to $n \in \Z$.
This allows us to formulate several results
in a more convenient way in what follows.
}

\begin{remark}[Terminology]
    The concept of \emph{strong mixing coefficients} is typically much stronger
	than the \emph{strong mixing} considered in 
	ergodic theory~\citep{Petersen1983}. Also note that we define
    the $\alpha$-mixing coefficient for stationary processes.
	It can also be defined for nonstationary processes,
	while mixing in the ergodic theoretical sense
	typically arises from dynamical systems induced by
    measure-preserving transformations and is therefore primarily 
    used in the context of stationary stochastic processes.
\end{remark}

\begin{example}[Mixing processes] \label{ex:mixing_processes}
    A wide range of mixing processes are given by \cite{Doukhan94} and
    \cite{Bradley2005}. We list some important examples here.
    \begin{enumerate}
        \item Irreducible and aperiodic stationary Markov processes 
        on $E \subseteq \R$ are $\alpha$-mixing 
        (in fact, a stronger mixing property called $\beta$\emph{-mixing} or 
        \emph{absolute regularity} holds, see for 
        example~\citealp[Corollary 3.6]{Bradley2005}).
        \item Stationary Markov processes satisfying
        \emph{geometric ergodicity}~\citep[for details see][Chapter 15]{MeTw12} 
        are $\alpha$-mixing with 
        $\alpha(n) = O(\exp(-cn))$ for some $c>0$
         \citep[Theorem 3.7]{Bradley2005}.
        \new{
        \item We consider a stochastic dynamical
        system $(X_t)_{t \in \N_0}$ also
        known as the \emph{nonlinear state space model} \citep[Chapter 7]{MeTw12}
        given by the recursion 
        \begin{equation*}
        	\label{eq:nonlinear_state_space_model}
        	X_t = h_t( X_{t-1}, Z_t),  \quad t \geq 1,
        \end{equation*} 
        where $h_t \colon E \rightarrow E$ are measurable
        functions and $Z_t$ are i.i.d.\ random variables on $E$
        which are independent of $X_0$.
        The process $(X_t)_{t \in \N_0}$ is a 
        Markov process~\cite[see for example][\rev{Proposition 8.6}]{Kallenberg}.
        Therefore 1. and 2. apply in this case.
        General conditions under which this system is geometrically ergodic 
        (i.e., geometrically $\alpha$-mixing in the sense of 2.) are 
        presented by~\citet[Section 2.4]{Doukhan94}.
        As a very basic example,
        the so-called \emph{simple linear model} on $E \subseteq \R$ given by
        \begin{equation*}
        	X_t = a X_{t-1} + Z_t, \quad a \in \R, \, t \geq 1,
        \end{equation*} is geometrically ergodic  
        if \rev{$\abs{a} < 1$} and
        the random variables $Z_t$ 
        \rev{are integrable and}
        admit an everywhere 
        positive density on $E$ \citep[Section 15.5.2]{MeTw12}.
        }
        \item Under some requirements, commonly used linear 
        and nonlinear process models on finite-dimensional
        vector spaces including AR, ARMA, ARCH, and GARCH are $\alpha$-mixing with 
        $\alpha(n) = O(\exp(-cn))$ for some $c>0$, see~\citet[Section 2.4]{Doukhan94}
        and~\citet[Section 2.6.1]{FanYao03}.       
    \end{enumerate}
    
\end{example}

We make use of the following classical lemma which we prove for completeness. 
It ensures that the mixing rates of
measurable transformations of a stationary process
process are at least as fast as the mixing rates of the original process.

\begin{lemma}[Mixing coefficients of transformed processes] 
    \label{lem:mixing_transformation}
 	Let $(X_t)_{t \in \Z}$ 
 	defined on $(\Omega, \sigalg, \Pr)$ be a process
    with values in the Polish space $(E, \sigalg_E)$
    equipped with its Borel field. 
    Let $(F, \sigalg_F)$ be another Polish space equipped with its Borel field.
 	Let $\eta \in \N$ and $h \colon E^{\eta+1} \rightarrow F$ 
 	be a $\sigalg_E^{\otimes \lag + 1}-\sigalg_F$ measurable transformation.
 	Then for the $F$-valued process $(H_t)_{t \in \Z}$ given by
  	\begin{equation*}
 	H_t := h(X_{t}, \dots, X_{t + \eta}), \quad t \in \Z,
 \end{equation*}
 	we have 
 	\begin{equation}\label{eq:mixing_transformation}
        \new{%
 		\alpha((H_t)_{t \in \Z}, n) \leq \alpha((X_t)_{t \in \Z}, n-\eta)
        }
 	\end{equation}
 	for all $n \in \Z$.
 	In particular, if $(X_t)_{t \in \Z}$ is $\alpha$-mixing,
 	then $(H_t)_{t \in \Z}$ is $\alpha$-mixing with the
 	same mixing rate as $(X_t)_{t \in \Z}$ or faster.
 \end{lemma}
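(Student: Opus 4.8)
The plan is to bound the $\alpha$-mixing coefficient of $(H_t)_{t\in\Z}$ directly in terms of that of $(X_t)_{t\in\Z}$ by tracking how the generated $\sigma$-fields sit inside one another. Fix $n\in\Z$. By definition, $\alpha((H_t)_{t\in\Z},n)=\alpha(\mathcal{G}_{-\infty}^0,\mathcal{G}_n^\infty)$, where $\mathcal{G}_l^m:=\sigma(H_t,\,l\le t\le m)$. The key observation is that $H_t=h(X_{t-k+1},\dots,X_t)$ is, by the measurability hypothesis on $h$ and the fact that $(x_1,\dots,x_k)\mapsto(x_1,\dots,x_k)$ is jointly measurable on the product $\sigma$-field (here one uses that $E$ is Polish so that $\sigalg_{E}^{\otimes k}=\mathcal{B}(E^k)$, as recorded in the preliminaries), a $\sigalg_E^{\otimes k}-\sigalg_F$ measurable function of $X_{t-k+1},\dots,X_t$. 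Hence $H_t$ is $\sigma(X_s:\,t-k+1\le s\le t)$-measurable, so
\begin{equation*}
\mathcal{G}_{-\infty}^0=\sigma(H_t:\,t\le 0)\subseteq\sigma(X_s:\,s\le 0)=\sigalg_{-\infty}^0,
\end{equation*}
and similarly
\begin{equation*}
\mathcal{G}_n^\infty=\sigma(H_t:\,t\ge n)\subseteq\sigma(X_s:\,s\ge n-k+1)=\sigalg_{n-k+1}^\infty.
\end{equation*}
For the first inclusion the relevant index range is $s\le t\le 0$; for the second it is $s\ge t-k+1\ge n-k+1$.

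The second step is purely monotonicity of the mixing coefficient: if $\sigalg_1'\subseteq\sigalg_1$ and $\sigalg_2'\subseteq\sigalg_2$, then
\begin{equation*}
\alpha(\sigalg_1',\sigalg_2')=\sup_{A\in\sigalg_1',B\in\sigalg_2'}\abs{\Pr[A\cap B]-\Pr[A]\Pr[B]}\le\sup_{A\in\sigalg_1,B\in\sigalg_2}\abs{\Pr[A\cap B]-\Pr[A]\Pr[B]}=\alpha(\sigalg_1,\sigalg_2),
\end{equation*}
since the supremum on the left is taken over a subset of the index set on the right. Combining with the two inclusions above gives
\begin{equation*}
\alpha((H_t)_{t\in\Z},n)=\alpha(\mathcal{G}_{-\infty}^0,\mathcal{G}_n^\infty)\le\alpha(\sigalg_{-\infty}^0,\sigalg_{n-k+1}^\infty)=\alpha((X_t)_{t\in\Z},n-k+1),
\end{equation*}
which is exactly \eqref{eq:mixing_transformation}. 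The "in particular" claim is then immediate: if $\alpha((X_t)_{t\in\Z},m)\to 0$ as $m\to\infty$, then since $n-k+1\to\infty$ as $n\to\infty$ (with $k$ fixed) the right-hand side tends to $0$, and moreover the rate is preserved up to the harmless shift by the constant $k-1$ in the argument.

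The only genuinely delicate point is the measurability bookkeeping in the first step — specifically, that $H_t$ is measurable with respect to $\sigma(X_{t-k+1},\dots,X_t)$ rather than some larger $\sigma$-field, and that $\sigma(H_t:\,t\le 0)$ is therefore contained in $\sigalg_{-\infty}^0$. This requires knowing that the map $\omega\mapsto(X_{t-k+1}(\omega),\dots,X_t(\omega))$ into $(E^k,\sigalg_E^{\otimes k})$ is measurable with respect to $\sigma(X_{t-k+1},\dots,X_t)$, which is standard, and then composing with the measurable $h$. The passage from "each $H_t$ is $\sigalg_{-\infty}^0$-measurable for $t\le0$" to "$\sigma(H_t:\,t\le0)\subseteq\sigalg_{-\infty}^0$" is just the defining minimality property of a generated $\sigma$-field. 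Everything else is the elementary monotonicity of suprema, so no real obstacle remains once the indexing is written out carefully; the Polish assumption is used solely to identify the Borel and product $\sigma$-fields so that joint measurability of the tuple is unambiguous.
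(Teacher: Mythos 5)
Your proof is correct and follows the same route as the paper's: establish the inclusions $\sigma(H_t:\,t\le 0)\subseteq\sigalg_{-\infty}^0$ and $\sigma(H_t:\,t\ge n)\subseteq\sigalg_{n-k+1}^\infty$ and invoke the monotonicity of $\alpha(\cdot,\cdot)$ under shrinking the $\sigma$-fields. You simply spell out the measurability bookkeeping and the supremum-monotonicity step that the paper leaves implicit.
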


 \begin{proof} 
     \new{%
    It is sufficient to consider the case
    $n > \lag$, as \eqref{eq:mixing_transformation} 
    is trivial whenever $n \leq \lag$
    by our definition of the $\alpha$-mixing coefficients.
    }
 	Let $\mathcal{H}_l^m := \sigma(H_t, l \leq t \leq m) \subseteq \sigalg$ 
 	be the $\sigma$-field generated by $(H_t)_{t \in \Z}$
 	for time horizons $l$ and $m$.	
 	By construction, for all $j \in \Z$ we have 
 	$\mathcal{H}_{- \infty}^{j} \subseteq \sigalg_{-\infty}^{j+\eta}$ as well as
 	$\mathcal{H}_{j}^{\infty} \subseteq \sigalg_{j}^\infty$ 
 	for all $j \in \Z$. 
 	Therefore, we have
 	\begin{equation*}
 		\alpha((H_t)_{t \in \Z}, n) 
        =  \sup_{j \in \Z} \alpha (\mathcal{H}_{-\infty}^{j},\mathcal{H}_{j+n}^{\infty})
 		\leq  \sup_{j \in \Z} \alpha (\sigalg_{-\infty}^{j+\eta},\sigalg_{j+n}^{\infty}) 
        = 	\alpha((X_t)_{t \in \Z}, n-\eta),
 	\end{equation*}
    proving the claim.
 \end{proof}

\subsection{Limit Theorems}

We now investigate the asymptotic statistical behavior 
of the estimator $\cov[n](\lag)$ under the assumption that $(X_t)_{t \in \Z}$ is strongly mixing.
For brevity, we introduce the shorthand notation
\begin{equation} \label{eq:process_shorthand}
	\xi_{t} := \big( \varphi(X_{t + \lag}) \otimes \varphi(X_{t}) \big) - C(\lag)
\end{equation}
for $t \in \Z$ and $\lag \in \N$. 
The process $(\xi_t)_{t \in \Z}$
is stationary and centered with values in $\HS{\inrkhs}$.
The estimation error can now be expressed as
$\cov[n](\lag) - \cov(\lag) = \tfrac{1}{n} \sum_{t =1 }^n \xi_t $.

\begin{lemma}[Mixing process in product RKHS] 
    \label{lem:mixing_transformation2}
\new{%
Let $\lag \in \N$ be the time lag used to define the 
$S_2(\inrkhs)$-valued process $(\xi_t)_{t \in \Z}$ given in 
by \eqref{eq:process_shorthand}. 
}
Then the $\alpha$-mixing coefficients of
of $(\xi_t)_{t \in \Z}$
satisfy
\begin{equation}
	\label{eq:mixing_transformation2}
    \new{%
	\alpha((\xi_t)_{t \in \Z}, n) \leq \alpha((X_t)_{t \in \Z}, n-\eta)
    }
    \quad \textnormal{for all } n \in \Z.
\end{equation}
In particular, if the process $(X_t)_{t \in \Z}$ is $\alpha$-mixing,
then $(\xi_t)_{t \in \Z}$ is $\alpha$-mixing with the
same rate as $(X_t)_{t \in \Z}$ or faster.
\end{lemma}

\begin{proof}
    It is sufficient to note that
    $(\xi_t)_{t \in \Z}$
    is obtained from $(X_t, \dots, X_{t + \lag})_{t \in \Z}$ via
    a measurable transformation.
    The assertion follows from Lemma~\ref{lem:mixing_transformation}.
\end{proof}

We will make use of the fact that whenever
the kernel $k$ is bounded, the process $(\xi_t)_{t \in \Z}$
is almost surely bounded. In particular, if
 $\sup_{x \in E} k(x,x) = c < \infty$,
then for all $t \in \Z$, we have
\begin{align}
	\label{eq:bounded-process}
	\begin{split}
		\norm{\xi_t}_{L^{\infty}( \Omega, \sigalg, \Pr;  \HS{\inrkhs}) } 
		&\leq \esssup_{\omega \in \Omega} \norm{ \varphi(X_{t + \lag}) 
            \otimes \varphi(X_{t} ) }_{\HS{\inrkhs}}  
		+ \norm{ C(\lag) }_{\HS{\inrkhs}}  \\
		&\leq \sup_{x \in E}  \norm{\varphi(x)}^2_\inrkhs 
        + \Ex \left[ \norm{ \varphi( X_{t + \lag} ) 
            \otimes \varphi( X_t )  }_{\HS{\inrkhs} }  \right] \\
		& \leq 2 \sup_{x \in E} \norm{\varphi(x)}^2_\inrkhs 
		= 2 \sup_{x \in E} k(x, x)
		= 2c.
	\end{split}
\end{align}

Several properties of the estimation error $\cov[n](\lag) - \cov(\lag)$ can
be proven by applying results from the asymptotic theory of weakly dependent 
Hilbertian processes to $(\xi_t)_{t \in \Z}$. We begin with
one of the strongest results of this type 
which is an approximation of 
the rescaled estimation error $n(\cov[n](\lag) - \cov(\lag))$ by a Gaussian process.

\new{%
To this end, we briefly recall the definition of a Gaussian measure on a 
Hilbert space \citep[see for example][]{BogachevGaussianMeasures},
which generalizes the concept of the finite-dimensional normal distribution.
A probability measure $\nu$ on a Hilbert space $H$ equipped 
with its Borel field is called
a \emph{Gaussian measure} if its characteristic function $\tilde{\nu}: H \rightarrow \C$
defined by $\tilde{\nu}(x) := \int_H \exp( i \innerprod{x}{y}_H ) \, \dd \nu(y)$
is of the form
\begin{equation}
	\label{eq:gaussian_measure}
	\tilde{\nu}(x) = \exp \left( \innerprod{\mu}{x}_H - \frac{1}{2} \innerprod{x}{T x}_H \right)
\end{equation}
for some $\mu \in H$ (the \emph{mean} of $\nu$) and positive-semidefinite
self-adjoint operator $T \in S_1(H)$
(the \emph{covariance operator} of $\nu$, see
\citealp[Theorem 2.3.1]{BogachevGaussianMeasures}).
If $\nu$ satisfies \eqref{eq:gaussian_measure}, we also write $\nu = \mathcal{N}(\mu,T)$.
We note that the covariance operator $T$ should not be confused with the kernel
autocovariance operator $\cov(\lag)$, as $T$ describes a
limiting covariance structure
of the sequence of 
laws associated with the empirical estimates of $\cov(\lag)$ in what follows.}\\
\rev{We now introduce the shorthand notation
    $L(n) := \max( \log n , 1)$ for $n \in \N$.}

\begin{theorem}[Almost sure invariance principle] \label{thm:ASIP}
    Let $(X_t)_{t \in \Z}$ be stationary and 
    $\alpha$-mixing with coefficients $(\alpha(t))_{t \in \Z}$ 
    such that
    $\sum_{t = 1}^\infty \alpha(t) < \infty$.
    Furthermore, let $\sup_{x \in E} k(x,x) < \infty$. 
    \new{%
    Let $\lag \in \N$ be the time lag used to define the 
    $S_2(\inrkhs)$-valued process $(\xi_t)_{t \in \Z}$ given in 
    by \eqref{eq:process_shorthand}. 
    }
    Then the  linear operator 
    $T_\lag \colon \HS{\inrkhs} \rightarrow \HS{\inrkhs} $
    defined by
    \begin{equation} \label{eq:asymptotic_cov_operator}
        T_\lag 
        := \Ex[ \xi_0 \otimes \xi_0 ] 
        + \sum_{t = 1}^\infty \Ex[ \xi_0 \otimes \xi_t ] +
         \sum_{t = 1}^\infty \Ex[ \xi_t \otimes \xi_0 ] 
    \end{equation} is trace class.
    Furthermore, there exists a Gaussian measure 
    $\mathcal{N}(0,T_\lag)$ on $\HS{\inrkhs}$
    and a sequence of i.i.d. $\HS{\inrkhs}$-valued Gaussian random variables 
    $(Z_t)_{t \in \Z} \sim \mathcal{N}(0,T_\lag)$
    defined on $(\Omega, \sigalg, \Pr)$ such that  we have $\Pr$-a.e.
    \begin{equation*}
        \norm{n (\cov[n](\lag) - \cov(\lag)) - \sum_{t = 1}^n Z_t }_{\HS{\inrkhs}} 
        = o\left(\sqrt{n L(L(n))} \right).
    \end{equation*} 
\end{theorem}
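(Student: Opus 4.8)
The plan is to reduce the statement to a known almost sure invariance principle (ASIP) for stationary, strongly mixing sequences taking values in a real separable Hilbert space, applied to the process $(\xi_t)_{t \in \Z}$ defined in~\eqref{eq:process_shorthand} with values in $H := \HS{\inrkhs}$. First I would record the structural facts already available: by Corollary~\ref{cor:mixing_transformation}, $(\xi_t)_{t \in \Z}$ is stationary, centered, and $\alpha$-mixing with $\alpha_{\xi}(t) \leq \alpha(t - \lag + 1)$, so the summability hypothesis $\sum_{t} \alpha(t) < \infty$ transfers directly to $\sum_t \alpha_\xi(t) < \infty$; and by~\eqref{eq:bounded-process}, $\norm{\xi_t}_{H} \leq 2c$ $\Pr$-a.e., where $c = \sup_{x} k(x,x) < \infty$. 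In particular $\xi_0 \in L^p(\Omega,\sigalg,\Pr;H)$ for every $p \geq 1$, which is far more than the moment condition any Hilbertian ASIP will require. Since $\inrkhs$ is separable (Assumption~\ref{ass:separability}), $H = \HS{\inrkhs} \simeq \inrkhs \otimes \inrkhs$ is separable, so the abstract Hilbert-space ASIP machinery applies.

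Next I would establish that the operator $T$ in~\eqref{eq:asymptotic_cov_operator} is well-defined and trace class. Each summand $\Ex[\xi_0 \otimes \xi_t]$ is a Hilbert--Schmidt — indeed trace class — operator on $H$, and one bounds its trace-class norm by a covariance inequality for $\alpha$-mixing Hilbert-valued random variables: since the $\xi_t$ are bounded by $2c$, the standard estimate gives $\norm{\Ex[\xi_0 \otimes \xi_t]}_{S_1(H)} \leq C\,\alpha_\xi(t)\,(2c)^2$ for a universal constant $C$ (this is the Hilbert-space vector-valued analogue of the classical Ibragimov/Davydov covariance bound; one may also invoke $\norm{\Ex[\xi_0\otimes\xi_t]}_{S_1(H)}\le \Ex\norm{\xi_0}_H\norm{\xi_t}_H$ combined with a truncation argument to get the $\alpha_\xi(t)$ factor). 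Summability of $\alpha_\xi$ then makes the series defining $T$ absolutely convergent in $S_1(H)$, so $T$ is trace class; it is clearly self-adjoint and positive semidefinite as the limit of the symmetrized partial sums $\Ex\big[(\sum_{t=1}^n \xi_t)\otimes(\sum_{t=1}^n\xi_t)\big]/n$, so the Gaussian measure $\mathcal{N}(0,T)$ on $H$ exists.

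With these preliminaries, the conclusion follows by quoting an ASIP for bounded (or sufficiently integrable) $\alpha$-mixing sequences in a separable Hilbert space — e.g. the results of Merlev\`ede--Peligrad--Utev or Dedecker--Merlev\`ede, or the Banach-space ASIP literature — which under $\sum_t \alpha(t) < \infty$ and uniform boundedness yields, after possibly enlarging the probability space, an i.i.d.\ sequence $(Z_i)_{i}\sim\mathcal{N}(0,T)$ with
\[
\Big\|\sum_{i=1}^n \xi_i - \sum_{i=1}^n Z_i\Big\|_{H} = o\big(\sqrt{n\, L(L(n))}\big)\quad \Pr\text{-a.e.},
\]
the rate being the law-of-the-iterated-logarithm scale $\sqrt{n\log\log n}$ up to the $L(n)=\max(\log n,1)$ convention for small $n$. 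Since $\sum_{i=1}^n \xi_i = n(\cov[n](\lag) - \cov(\lag))$ by the remark following~\eqref{eq:process_shorthand}, this is exactly the claimed bound. The main obstacle is bookkeeping rather than conceptual: one must locate a Hilbert-valued ASIP whose hypotheses are met verbatim by a bounded, stationary, summably-$\alpha$-mixing sequence (some versions are stated for $\R$-valued or finite-dimensional sequences, or require a fractional-power moment/mixing trade-off), and verify that the covariance structure of the approximating Gaussian process is precisely $T$ as written — in particular that the cross terms $\sum_{t\ge1}\Ex[\xi_0\otimes\xi_t]$ and $\sum_{t\ge1}\Ex[\xi_t\otimes\xi_0]$ both appear, reflecting the non-symmetry of $\xi_0\otimes\xi_t$ as an operator. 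Boundedness of the $\xi_t$ makes every moment and mixing-moment condition in such theorems trivially satisfied, so once the right reference is fixed the verification is routine.
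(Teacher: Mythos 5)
Your proposal is correct and follows essentially the same route as the paper: verify that $(\xi_t)_{t\in\Z}$ is stationary, centered, $\Pr$-a.e.\ bounded by $2c$ via~\eqref{eq:bounded-process}, and $\alpha$-mixing with summable coefficients via Corollary~\ref{cor:mixing_transformation}, then invoke the Hilbert-space almost sure invariance principle of Dedecker and Merlev\`ede (2010, Corollary~1) — the very reference you name — whose conclusion already delivers the trace-class covariance operator $T$ and the $o(\sqrt{nL(L(n))})$ rate. Your additional covariance-inequality argument for the trace-class property of $T$ is sound but not needed, since it is part of the cited result's conclusion.
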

\begin{proof}
    The assumptions ensure that 
    $(\xi_t)_{t \in \Z}$ is $\Pr$-a.e.\ bounded
    by \eqref{eq:bounded-process} and has 
    summable mixing coefficients by \eqref{eq:mixing_transformation2}.
    We can directly apply the almost sure invariance principle 
    from~\citet[Corollary 1]{DedeckerMerlevede2010:ASIP}
    to $(\xi_t)_{t \in \Z}$, which yields the assertion.
    \new{%
    We emphasize
    that the $\Pr$-a.e.\ boundedness of $(\xi_t)_{t \in \Z}$ and 
    the summability of the mixing coefficients ensure that
    this result can be applied, as explicitly
    mentioned by the authors (see also \citealp{Merlevede2008:LIL}, Remark 3).
    }
\end{proof}

A strongly related statement is a 
standard central limit theorem for weakly dependent sequences
which ensures asymptotic normality in the space $\HS{\inrkhs}$.

\begin{theorem}[Central limit theorem] \label{thm:CLT}
    \new{Let $\lag \in \N$ be a fixed time lag.}
    Under the assumptions of Theorem~\ref{thm:ASIP}, 
    the laws of the sequence $\sqrt{n} (C_n(\lag) - C(\lag))$
    converge weakly to a Gaussian measure $\mathcal{N}(0,T_\lag)$ 
    on $\HS{\inrkhs}$ 
    with covariance operator $T_\lag$ 
    defined by~\eqref{eq:asymptotic_cov_operator}.
\end{theorem}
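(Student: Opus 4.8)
\emph{Proof proposal.} The plan is to establish the weak convergence directly, treating it as a genuine central limit theorem rather than a corollary of Theorem~\ref{thm:ASIP}: dividing the almost sure bound of Theorem~\ref{thm:ASIP} by $\sqrt{n}$ leaves an error term of size $o\!\left(\sqrt{L(L(n))}\right)$, which does not vanish, so a Slutsky-type deduction is not available. The point of departure is that, by~\eqref{eq:bounded-process} and Corollary~\ref{cor:mixing_transformation}, the process $(\xi_t)_{t \in \Z}$ from~\eqref{eq:process_shorthand} is stationary, centered, $\Pr$-a.e.\ bounded (hence Bochner $p$-integrable for every $p < \infty$), takes values in the separable Hilbert space $\HS{\inrkhs}$ (Assumption~\ref{ass:separability}), and is $\alpha$-mixing with $\sum_{t} \alpha(t) < \infty$; we must show that $\tfrac{1}{\sqrt n}\sum_{t=1}^n \xi_t$ converges in law to $\mathcal{N}(0,T)$.

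First I would invoke a central limit theorem for stationary $\alpha$-mixing sequences taking values in a separable Hilbert space under summable mixing coefficients and a $(2+\delta)$-th moment condition --- here trivially met by the almost sure boundedness~\eqref{eq:bounded-process} --- as found e.g.\ in~\citet{Bosq2000} and in the circle of ideas around~\citet{DedeckerMerlevede2010:ASIP}. This yields $\tfrac{1}{\sqrt n}\sum_{t=1}^n \xi_t \Rightarrow \mathcal{N}(0,S)$ for some trace-class covariance operator $S$ on $\HS{\inrkhs}$, and it remains to identify $S = T$. For fixed $A \in \HS{\inrkhs}$ the sequence $\big(\innerprod{\xi_t}{A}_{\HS{\inrkhs}}\big)_{t \in \Z}$ is a bounded stationary $\alpha$-mixing real sequence, so the covariance inequality for $\alpha$-mixing variables~\citep{Bradley2005} gives $\sum_{t \geq 1} \abs{\Ex[\innerprod{\xi_0}{A}\innerprod{\xi_t}{A}]} < \infty$; the usual long-run-variance identity together with dominated convergence then gives $\lim_{n} \Var\big(\tfrac{1}{\sqrt n}\sum_{t=1}^n \innerprod{\xi_t}{A}\big) = \innerprod{TA}{A}_{\HS{\inrkhs}}$, with $T$ as in~\eqref{eq:asymptotic_cov_operator} (expanding $\xi_0 \otimes \xi_t$ as a rank-one operator). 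Since $T$ is trace class by Theorem~\ref{thm:ASIP}, $\mathcal{N}(0,T)$ is a well-defined centered Gaussian measure on $\HS{\inrkhs}$, and a Gaussian measure on a separable Hilbert space is determined by its covariance operator; hence $S = T$.

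A self-contained alternative, avoiding a direct appeal to an infinite-dimensional CLT, is the classical two-step argument via Prokhorov's theorem. For the finite-dimensional distributions, fix $A_1, \dots, A_d \in \HS{\inrkhs}$; the $\R^d$-valued process $\big(\innerprod{\xi_t}{A_1}, \dots, \innerprod{\xi_t}{A_d}\big)_{t \in \Z}$ is a bounded $\alpha$-mixing sequence (apply Lemma~\ref{lem:mixing_transformation} with $k=1$ to the continuous, hence Borel, linear map $B \mapsto (\innerprod{B}{A_1}, \dots, \innerprod{B}{A_d})$) with $\sum_t \alpha(t) < \infty$, so the scalar CLT for $\alpha$-mixing sequences~\citep{Bradley2005} and the Cram\'er--Wold device yield joint asymptotic normality with limiting covariance matrix $\big(\innerprod{TA_i}{A_j}_{\HS{\inrkhs}}\big)_{i,j=1}^d$, computed as above. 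For tightness, fix a CONS $(\hat e_i)_{i \in \N}$ of $\HS{\inrkhs}$, let $P_N$ be the orthogonal projection onto $\mspan\{\hat e_1, \dots, \hat e_N\}$, and verify the two standard conditions for tightness of a sequence of laws on a separable Hilbert space, namely $\sup_n \Ex\norm{\tfrac{1}{\sqrt n}\sum_{t=1}^n \xi_t}_{\HS{\inrkhs}}^2 < \infty$ and $\lim_{N \to \infty} \sup_n \Ex\norm{(\id - P_N)\tfrac{1}{\sqrt n}\sum_{t=1}^n \xi_t}_{\HS{\inrkhs}}^2 = 0$. Both reduce to summing covariances: writing the quantities as $\tfrac1n \sum_{s,t=1}^n \Ex\innerprod{\xi_s}{\xi_t}_{\HS{\inrkhs}}$ and $\tfrac1n \sum_{s,t=1}^n \Ex\innerprod{(\id - P_N)\xi_s}{(\id - P_N)\xi_t}_{\HS{\inrkhs}}$, the covariance inequality and $\sum_t \alpha(t) < \infty$ bound them uniformly in $n$ by expressions converging, respectively, to $\Tr T$ and to $\Tr\big((\id - P_N)T\big) = \sum_{i > N} \innerprod{T \hat e_i}{\hat e_i}_{\HS{\inrkhs}}$, and the latter tends to $0$ as $N \to \infty$ since $T$ is trace class. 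Combining convergence of the finite-dimensional distributions with tightness gives the asserted weak convergence to $\mathcal{N}(0,T)$.

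I expect the tightness step (equivalently, checking the hypotheses of whichever Hilbert-space CLT is cited) to be the main obstacle: it is exactly where one must couple the quantitative covariance inequalities for $\alpha$-mixing sequences with the summability $\sum_t \alpha(t) < \infty$ and exploit the trace-class property of $T$ from Theorem~\ref{thm:ASIP} to make the tail bound $\Tr\big((\id - P_N)T\big) \to 0$ work uniformly in $n$. Once this is in place, the finite-dimensional CLT and the identification $S = T$ are routine computations with the long-run covariance operator.
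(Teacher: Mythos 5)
Your first route is essentially the paper's proof: the paper simply notes that $(\xi_t)_{t\in\Z}$ is a stationary, centered, bounded, $\alpha$-mixing Hilbert-space-valued sequence with summable mixing coefficients and invokes the central limit theorem of \citet[Corollary 1]{MerlevedeEtAl97}, which already delivers the limit law $\mathcal{N}(0,T)$ with $T$ as in~\eqref{eq:asymptotic_cov_operator}, so your correct observation that the ASIP alone does not yield the CLT, your identification of the long-run covariance operator, and your finite-dimensional-distributions-plus-tightness alternative are all sound but go beyond the paper's one-line citation.
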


\begin{proof}
    By our previous analysis 
    \new{%
        and the argumentation
        in the proof of Theorem~\ref{thm:ASIP},
    }
    the process $(\xi_t)_{t \in \Z}$
    satisfies all assumptions of the central limit theorem by
    \citet[Corollary 1]{MerlevedeEtAl97}. The above assertions
    follow directly.
\end{proof}

The next result is a compact law of the iterated logarithm.
It ensures that an appropriately rescaled version
of the estimation error approximates a compact limiting set almost surely.
Additionally, it characterizes this set as the accumulation points of 
the estimation error sequence and
gives a norm bound in $\HS{\inrkhs}$ depending on the mixing rate.
\new{%
We obtain our result by applying an
infinite-dimensional generalization of the classical law of 
the iterated logarithm for real-valued random variables.
The original law of the iterated logarithm 
plays an important role in sequential hypothesis testing
\citep{Robbins70, Siegmund85}, leading to potential applications
in reinforcement learning 
as noted by \cite{Kaufman16}.}
\\

Let $\acc(x_n) \subseteq X$ denote the set of all 
\emph{accumulation points} of a sequence $(x_n)_{n \in \N}$ in a 
metric space $(X, d)$
\new{%
    and $\dist(x, A) := \inf \{ d(x,y) \, | \, y \in A \}$
    be the \emph{distance} between a point $x \in X$
    and a set $A \subseteq X$.
}

\begin{theorem}[Compact law of the iterated logarithm] \label{thm:clil}
    Let the process $(X_t)_{t \in \Z}$ be stationary and 
    $\alpha$-mixing with coefficients $(\alpha(t))_{t \in \Z}$ 
    such that
    $\sum_{t = 1}^\infty \alpha(t) < \infty$.
    Furthermore, let 
    $\sup_{x \in E} k(x,x)\rev{=c}< \infty$. 
    Then \new{for every time lag $\lag \in \N$} there exists a 
    compact, convex and symmetric set $K_\lag \subseteq \HS{\inrkhs}$,
    such that $\Pr$-a.e.
    \begin{equation} \label{eq:LIL-lim}
        \lim_{n \to \infty} 
        \dist\left(\frac{\sqrt{n}(\cov[n](\lag) 
            - \cov(\lag))}{\sqrt{2L(L(n))}} , K_\lag \right)  = 0
    \end{equation} as well as $\Pr$-a.e.
    \begin{equation} \label{eq:LIL-acc}
        \acc \left(
        \frac{\sqrt{n}(\cov[n](\lag) - \cov(\lag))}{\sqrt{2L(L(n))}} 
    \right) 
        = K_\lag.
    \end{equation}
    Moreover, whenever $\sum_{t = 1}^\infty \alpha(t - \lag) = M < \infty$,
    we have
    \begin{equation} \label{eq:LIL-bound}
        \sup_{A \in K_\lag} \norm{A}_{\HS{\inrkhs}} 
        = (4c^2 + 32\, c^2 M)^{1/2}.
    \end{equation}
\end{theorem}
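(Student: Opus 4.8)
The plan is to invoke the compact law of the iterated logarithm for weakly dependent Hilbert-space valued sequences, exactly as the two preceding theorems invoke the ASIP and the CLT. Concretely, I would cite the compact LIL of \citet{DedeckerMerlevede2010:ASIP} (or the closely related result that accompanies their almost sure invariance principle — an ASIP automatically yields a compact LIL with limiting set $K$ equal to the unit ball of the reproducing kernel Hilbert space associated with the Gaussian covariance operator $T$ from~\eqref{eq:asymptotic_cov_operator}). By Corollary~\ref{cor:mixing_transformation} and~\eqref{eq:bounded-process}, the process $(\xi_t)_{t \in \Z}$ is stationary, centered, $\Pr$-a.e.\ bounded by $2c$ in $\HS{\inrkhs}$, and $\alpha$-mixing with $\sum_t \alpha(t) < \infty$; these are precisely the hypotheses needed. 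Writing $a_n = \sqrt{2 L(L(n))}$, the cited result then gives a compact, convex, symmetric set $K \subseteq \HS{\inrkhs}$ for which \eqref{eq:LIL-lim} and \eqref{eq:LIL-acc} hold $\Pr$-a.e., with $K$ identified as the unit ball $\{ A : \langle T^{-1} A, A\rangle_{\HS{\inrkhs}} \le 1 \}$ (the Cameron--Martin-type ellipsoid of $T$). Since $T$ is trace class by Theorem~\ref{thm:ASIP}, $K$ is indeed compact.

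For the norm bound~\eqref{eq:LIL-bound}, the key identity is that for the unit ball $K$ of the RKHS of $T$ one has $\sup_{A \in K} \norm{A}_{\HS{\inrkhs}} = \norm{T}^{1/2}$, the square root of the largest eigenvalue (equivalently, the square root of the operator norm of $T$). So the task reduces to bounding $\norm{T} \le \tr(T)$ and estimating the trace of $T$ from~\eqref{eq:asymptotic_cov_operator}. I would write
\begin{equation*}
\tr(T) = \Ex[\norm{\xi_0}^2] + 2 \sum_{t=1}^\infty \Ex[\innerprod{\xi_0}{\xi_t}_{\HS{\inrkhs}}],
\end{equation*}
using the symmetry $\tr(\Ex[\xi_0 \otimes \xi_t]) = \tr(\Ex[\xi_t \otimes \xi_0]) = \Ex[\innerprod{\xi_0}{\xi_t}_{\HS{\inrkhs}}]$. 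The first term is bounded by $(2c)^2 = 4c^2$ via~\eqref{eq:bounded-process}. For the covariance terms, the standard covariance inequality for $\alpha$-mixing bounded random variables (Davydov/Rio-type: $|\Cov(f,g)| \le 4 \alpha \norm{f}_\infty \norm{g}_\infty$ when $f,g$ are bounded, here applied componentwise to the real random variable $\innerprod{\xi_0}{\xi_t}$, or more carefully to a bilinear form after noting $\norm{\xi_0}, \norm{\xi_t} \le 2c$) gives $|\Ex[\innerprod{\xi_0}{\xi_t}_{\HS{\inrkhs}}]| \le 4 \cdot (2c)(2c) \cdot \alpha_\xi(t)$ where $\alpha_\xi(t) \le \alpha(t-\lag)$ is the mixing coefficient of $(\xi_t)$; the shift by $\lag$ comes from Lemma~\ref{lem:mixing_transformation} / Corollary~\ref{cor:mixing_transformation}. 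Summing, $2 \sum_{t=1}^\infty |\Ex[\innerprod{\xi_0}{\xi_t}]| \le 2 \cdot 16 c^2 \sum_{t=1}^\infty \alpha(t-\lag) = 32 c^2 M$, so $\sup_{A \in K}\norm{A}_{\HS{\inrkhs}}^2 = \norm{T} \le \tr(T) \le 4c^2 + 32 c^2 M$, which is~\eqref{eq:LIL-bound} after taking square roots.

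I expect two points to require the most care. First, the precise identification of the limiting set $K$ with the unit ball of the RKHS of $T$ and the consequent formula $\sup_{A \in K} \norm{A} = \norm{T}^{1/2}$: this is standard in the Strassen/Kuelbs LIL literature for Gaussian measures on Hilbert spaces, but one must make sure the cited weak-dependence LIL actually delivers this identification (an ASIP plus the classical Gaussian LIL does), and that measurability/separability of $\HS{\inrkhs}$ — guaranteed by Assumption~\ref{ass:separability} — is in force. Second, getting the constants exactly right in the covariance inequality: the factor $4$ in Rio's inequality, the factor $2$ from symmetrizing the two sums in~\eqref{eq:asymptotic_cov_operator}, and the product bound $\norm{\xi_0}_\infty \norm{\xi_t}_\infty \le 4c^2$ must combine to $32 c^2 M$ rather than some other multiple; this is purely bookkeeping but is where an off-by-a-constant error would hide. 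Everything else — reflexivity of $\HS{\inrkhs}$, boundedness of $\xi_t$, summability of mixing coefficients — is already established in the excerpt and simply needs to be cited.
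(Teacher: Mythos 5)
Your proposal follows essentially the same route as the paper: invoke a compact LIL for bounded, stationary, $\alpha$-mixing Hilbert-valued sequences (the paper uses Merlev\`ede's 2008 Theorem~2 rather than the Dedecker--Merlev\`ede ASIP paper, but your hedge covers this), identify $K$ with the unit ball of the Cameron--Martin space of $T$, and then reduce \eqref{eq:LIL-bound} to a bound on $\norm{T}$ via the covariance inequality for bounded mixing variables, arriving at the same constant $4c^2 + 32c^2M$. The one place you deviate is the final bounding step: you pass through $\norm{T} \leq \tr{T}$ and estimate $\abs{\Ex[\innerprod{\xi_0}{\xi_t}_{\HS{\inrkhs}}]}$ directly, whereas the paper bounds the operator norm of each summand by writing $\norm{\Ex[\xi_t \otimes \xi_0]} = \sup_{\norm{A}=\norm{B}=1}\abs{\Ex[\innerprod{\xi_t}{B}\innerprod{\xi_0}{A}]}$ and applying the scalar Ibragimov inequality to the two real-valued, centered, $2c$-bounded variables $\innerprod{\xi_t}{B}$ and $\innerprod{\xi_0}{A}$. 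Be aware that your parenthetical ``componentwise'' shortcut does not literally work: summing the inequality $\abs{\Cov(\innerprod{\xi_0}{e_i},\innerprod{\xi_t}{e_i})} \leq 4\alpha \norm{\innerprod{\xi_0}{e_i}}_\infty \norm{\innerprod{\xi_t}{e_i}}_\infty$ over a basis does not control the result by $\norm{\xi_0}_\infty\norm{\xi_t}_\infty$; you need either the genuinely Hilbertian covariance inequality (with the constant verified) or the bilinear-form reduction you mention in passing, which is exactly what the paper does and which makes the constant $4 \cdot 2c \cdot 2c = 16c^2$ per term, hence $32c^2M$ after symmetrizing, unambiguous. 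A final cosmetic point shared with the paper: both arguments only establish an upper bound on $\sup_{A \in K}\norm{A}_{\HS{\inrkhs}}$, not the equality asserted in \eqref{eq:LIL-bound}.
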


For the proof of Theorem~\ref{thm:clil}, see Appendix~\ref{app:proofs}.
\rev{%
We note that from \eqref{eq:LIL-acc}, we can deduce
\begin{equation*}
    \limsup_{n \to \infty}
    \frac{
        \sqrt{n}
        \norm{\cov[n](\lag) - \cov(\lag)}_{\HS{\inrkhs}}
    }
    {\sqrt{2L(L(n))}} 
    = 
    \sup_{A \in K_\lag} \norm{A}_{\HS{\inrkhs}},
\end{equation*}
which gives us the rate of
$\Pr$-a.e.\ convergence of the estimation error.
\begin{corollary}[Rate of convergence]
    \label{cor:convergence_rate}
    Under the assumptions of Theorem~\ref{thm:clil},
    we have
    \begin{equation*}
        \norm{\cov[n](\lag) - \cov(\lag)}_{\HS{\inrkhs}} = 
        O \left(
        \frac{\sqrt{2L(L(n))}}%
        {\sqrt{n}}
        \right)
        \quad \Pr\textnormal{-a.e.}
    \end{equation*}
    for every time lag $\lag \in \N$.
\end{corollary}
In particular, Corollary~\ref{cor:convergence_rate}
shows that boundedness of the kernel and summability of the
mixing coefficients are sufficient to obtain
the rate of convergence which is known
to hold in the strong law of large numbers 
for independent random variables as given by the classical law
of the iterated logarithm
(see for example \citealp{Acosta1983}).
}

    \begin{remark}[Optimality of mixing rate assumptions]
    \new{
    The results in this section require that the 
    $\alpha$-mixing coefficients of $(X_t)_{t \in \Z}$
    are summable. We now briefly address the question whether
    similar asymptotic statements can be derived under less strict assumptions.
    \citet{MerlevedeEtAl97}, \citet{Merlevede2008:LIL} 
    and \citet{DedeckerMerlevede2010:ASIP}
    use more general and much more technical quantile conditions than the summability
    of the mixing coefficients in order to prove the asymptotic results which 
    we apply here.
    These quantile conditions are known to be
    necessary for a central limit theorem to hold for real-valued processes. 
    We refer the reader to~\citet[Section 4]{Doukhan1994:CLT} for additional information. 
    For bounded random variables however, the summability of the 
    mixing coefficients is equivalent
    to the mentioned quantile conditions.
    This is investigated by~\citet[Application 1]{Rio1995}.
    A similar argument
    can be used for the law of the iterated logarithm~\cite[see also][]{Rio1995}.
    }
\end{remark}

\section{Concentration Bounds}
\label{sec:concentration_bounds}

In addition to the previous asymptotic results,
concentration properties for the estimation error can be derived
by using concentration properties of mixing Hilbertian processes.
\new{%
We now introduce the covariance operator of the
$\HS{\inrkhs}$-valued random variable
$\xi_t$ defined in~\eqref{eq:process_shorthand},
which we will denote by $\Gamma_{\lag} \in S_1(\HS{\inrkhs})$.
The eigendecomposition of
$\Gamma_{\lag}$ allows to quantify the second moment
of the empirical kernel autocovariance operators $C_n(\eta)$,
which is needed to derive concentration bounds.
We emphasize that 
the operator $\Gamma_{\lag}$ should
not be confused with the operator $T_\lag$ introduced 
in \eqref{eq:asymptotic_cov_operator}, which describes the 
\emph{asymptotic covariance} of $\sqrt{n}(C_n(\lag)- C(\lag))$,
while $\Gamma_{\lag}$ describes the variance of the marginal
$\law(\xi_0)$
of the stationary process $(\xi_{t})_{t \in \Z}$.
}

\begin{theorem}[Error bound] \label{thm:finite_sample_bound}
    Let $(X_t)_{t \in \Z}$ be stationary
    with $\alpha$-mixing coefficients $(\alpha(t))_{t \in \Z}$
    Let furthermore $\sup_{x \in E} k(x,x) = c < \infty$.
    Then for every time lag $\lag \in \N$,
    $\epsilon > 0$, $\nu \in \N$, $n \geq 2$ as well as
    $q \in \{1, \dots, \floor*{n/2} \}$ and
    $\delta \in (0,1)$, we have
    \begin{align*}
        \Pr{ \left[ \norm{C_{n}(\lag) - C(\lag)}_{\HS{\inrkhs}}  > \epsilon \right] }
        &\leq 4 \nu \exp\left(- \frac{(1-\delta) \epsilon^2 q}{32 \nu c^2} \right) \\
        &+ 22\nu q \left( 1 + \frac{8c \new{\sqrt{\nu}}}%
                {\epsilon(1-\delta)^{1/2}}  
        \right)^{1/2} \alpha(\floor*{n/2q} \new{- \lag}) 
        + \frac{1}{\delta \epsilon^2} \sum_{j > \nu} \lambda_j,
    \end{align*}
    where the nonnegative real numbers $(\lambda_j)_{j \geq 1}$ 
    are the nonincreasingly ordered eigenvalues of the covariance operator 
    \begin{equation*}
    	\Gamma_{\lag} \colon \HS{\inrkhs} \rightarrow \HS{\inrkhs}
	\end{equation*}
    defined by
    \begin{equation} \label{eq:covcov_operator}
   \Gamma_{\lag} := 
   \Ex \left[ 
   \Big( \big( \varphi(X_{\lag}) \otimes \varphi(X_{0}) \big) - C(\lag) \Big)
   \otimes
   \Big( \big( \varphi(X_{\lag}) \otimes \varphi(X_{0}) \big) - C(\lag) \Big)
   \right].
    \end{equation}
\end{theorem}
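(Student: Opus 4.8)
The plan is to reduce this operator-valued concentration statement to finitely many scalar tail bounds by projecting onto the leading eigenspace of $\Gamma$ and then invoking a Bernstein-type blocking inequality for strongly mixing sequences coordinate-wise. Recall from~\eqref{eq:process_shorthand} that $C_{n}(\lag) - C(\lag) = \tfrac{1}{n}\sum_{t=1}^{n}\xi_{t}$, where by Corollary~\ref{cor:mixing_transformation} the process $(\xi_{t})_{t \in \Z}$ is stationary, centered, $\HS{\inrkhs}$-valued and $\alpha$-mixing with the coefficients of $(X_{t})_{t \in \Z}$, and by~\eqref{eq:bounded-process} satisfies $\norm{\xi_{t}}_{\HS{\inrkhs}} \le 2c$ $\Pr$-a.e. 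The operator $\Gamma$ in~\eqref{eq:covcov_operator} equals $\Ex[\xi_{0} \otimes \xi_{0}]$, the covariance operator of the $\HS{\inrkhs}$-valued random variable $\xi_{0}$; since $\Ex\norm{\xi_{0}}_{\HS{\inrkhs}}^{2} \le 4c^{2}$, it is positive, self-adjoint and trace class, hence admits an orthonormal eigenbasis $(e_{j})_{j}$ of $\HS{\inrkhs}$ with eigenvalues $\lambda_{1} \ge \lambda_{2} \ge \cdots \ge 0$. For the fixed $\nu \in \N$ let $P_{\nu}$ denote the orthogonal projection in $\HS{\inrkhs}$ onto $\mspan\{e_{1}, \dots, e_{\nu}\}$.

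Because $P_{\nu}$ is an orthogonal projection, $\norm{C_{n}(\lag) - C(\lag)}_{\HS{\inrkhs}}^{2} = \norm{P_{\nu}(C_{n}(\lag) - C(\lag))}_{\HS{\inrkhs}}^{2} + \norm{(I - P_{\nu})(C_{n}(\lag) - C(\lag))}_{\HS{\inrkhs}}^{2}$, so the event $\{\norm{C_{n}(\lag) - C(\lag)}_{\HS{\inrkhs}} > \epsilon\}$ lies in the union of $\{\norm{P_{\nu}(C_{n}(\lag)-C(\lag))}_{\HS{\inrkhs}}^{2} > (1-\delta)\epsilon^{2}\}$ and $\{\norm{(I-P_{\nu})(C_{n}(\lag)-C(\lag))}_{\HS{\inrkhs}}^{2} > \delta\epsilon^{2}\}$. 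For the second (tail) event I would use that, for any stationary $\HS{\inrkhs}$-valued $L^{2}$ process $(\eta_{t})$, Cauchy--Schwarz in $L^{2}(\Omega, \sigalg, \Pr; \HS{\inrkhs})$ together with stationarity gives $\Ex\norm{\tfrac{1}{n}\sum_{t=1}^{n}\eta_{t}}_{\HS{\inrkhs}}^{2} = \tfrac{1}{n^{2}}\sum_{s,t}\Ex\innerprod{\eta_{s}}{\eta_{t}}_{\HS{\inrkhs}} \le \Ex\norm{\eta_{0}}_{\HS{\inrkhs}}^{2}$; applied to $\eta_{t} = (I-P_{\nu})\xi_{t}$ this yields $\Ex\norm{(I-P_{\nu})(C_{n}(\lag)-C(\lag))}_{\HS{\inrkhs}}^{2} \le \Ex\norm{(I-P_{\nu})\xi_{0}}_{\HS{\inrkhs}}^{2} = \tr{(I-P_{\nu})\Gamma} = \sum_{j > \nu}\lambda_{j}$, and Markov's inequality bounds the tail event by $\tfrac{1}{\delta\epsilon^{2}}\sum_{j>\nu}\lambda_{j}$, the last term of the claim.

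For the projected part, expanding in the eigenbasis gives $\norm{P_{\nu}(C_{n}(\lag)-C(\lag))}_{\HS{\inrkhs}}^{2} = \sum_{j=1}^{\nu}\big(\tfrac{1}{n}\sum_{t=1}^{n}\innerprod{\xi_{t}}{e_{j}}_{\HS{\inrkhs}}\big)^{2}$, so if this exceeds $(1-\delta)\epsilon^{2}$ then $\abs{\tfrac{1}{n}\sum_{t=1}^{n}\innerprod{\xi_{t}}{e_{j}}_{\HS{\inrkhs}}} > \epsilon\sqrt{(1-\delta)/\nu}$ for at least one $j \in \{1,\dots,\nu\}$, and a union bound reduces matters to $\nu$ scalar tail probabilities. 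For each $j$ the real sequence $Y^{(j)}_{t} := \innerprod{\xi_{t}}{e_{j}}_{\HS{\inrkhs}}$ is stationary, centered, bounded by $\norm{\xi_{t}}_{\HS{\inrkhs}}\norm{e_{j}}_{\HS{\inrkhs}} \le 2c$, and, being a fixed measurable function of $\xi_{t}$ and hence of finitely many coordinates of $(X_{t})_{t \in \Z}$, $\alpha$-mixing with coefficients no larger than $\alpha(\cdot)$ by Lemma~\ref{lem:mixing_transformation}. I would then apply a classical Bernstein-type blocking inequality for bounded strongly mixing scalar sequences (see, e.g., \citealt{Doukhan94}) --- obtained by splitting $Y^{(j)}_{1},\dots,Y^{(j)}_{n}$ into $2q$ alternating blocks of length $\approx n/2q$, coupling the blocks of one parity to an independent copy at a cost $\alpha(\floor*{n/2q})$ per block, and applying the scalar Bernstein inequality to the independent surrogates --- which gives $\Pr[\abs{\tfrac{1}{n}\sum_{t=1}^{n}Y^{(j)}_{t}} > \epsilon'] \le 4\exp(-\tfrac{(\epsilon')^{2} q}{8(2c)^{2}}) + 22 q\,(1 + \tfrac{8c}{\epsilon'})^{1/2}\alpha(\floor*{n/2q})$ for $1 \le q \le \floor*{n/2}$. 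Choosing $\epsilon' = \epsilon\sqrt{(1-\delta)/\nu}$, so that $8(2c)^{2}/(\epsilon')^{2} = 32\nu c^{2}/((1-\delta)\epsilon^{2})$, summing over the $\nu$ coordinates and combining with the tail bound produces the three-term estimate after the elementary simplification of the $\nu$-dependent constants.

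The eigen-expansion bookkeeping and the tracking of constants through the rescaling $\epsilon \mapsto \epsilon\sqrt{(1-\delta)/\nu}$ are routine. The step I would treat most carefully is the scalar Bernstein inequality under $\alpha$-mixing: one must check that the coordinate processes $Y^{(j)}$ inherit the mixing coefficients --- which is exactly what Lemma~\ref{lem:mixing_transformation} and Corollary~\ref{cor:mixing_transformation} deliver --- and then quote a version of Bosq's exponential inequality for bounded $\alpha$-mixing sequences; the a.e.\ bound $\norm{\xi_{t}}_{\HS{\inrkhs}} \le 2c$ from~\eqref{eq:bounded-process} is what makes this inequality applicable and what pins down the constant $8(2c)^{2} = 32c^{2}$ appearing in the exponent.
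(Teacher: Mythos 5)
Your argument is correct in structure and is, in substance, a reconstruction of the proof of the result that the paper simply cites: the paper's own proof consists only of verifying that $(\xi_t)_{t\in\Z}$ from~\eqref{eq:process_shorthand} is a stationary, centered, $\Pr$-a.e.\ bounded (by $2c$, via~\eqref{eq:bounded-process}) and $\alpha$-mixing $\HS{\inrkhs}$-valued sequence --- exactly the checks you perform through Corollary~\ref{cor:mixing_transformation} --- and then invoking the exponential inequality of Bosq (2000, Theorem~2.12) as a black box. What you write out (splitting off the top-$\nu$ eigenspace of $\Gamma=\Ex[\xi_0\otimes\xi_0]$, bounding the orthogonal complement via stationarity, the trace identity $\Ex\norm{(I-P_\nu)\xi_0}_{\HS{\inrkhs}}^2=\sum_{j>\nu}\lambda_j$ and Markov, then treating the $\nu$ projected coordinates by a union bound plus Rio's blocking/coupling inequality for bounded $\alpha$-mixing scalar sequences) is precisely how that theorem of Bosq is proved, so the two routes coincide at different levels of granularity. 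Yours has the advantage of making explicit which hypothesis produces which of the three terms; the paper's has the advantage of brevity, and still rests on the same external scalar Bernstein-type inequality that you quote.

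One point to tighten: with the per-coordinate threshold $\epsilon'=\epsilon\sqrt{(1-\delta)/\nu}$ and coordinate bound $2c$, the scalar inequality you invoke yields a mixing term $22q\,(1+8c/\epsilon')^{1/2}\alpha(\floor*{n/2q})$ per coordinate, i.e.\ $22\nu q\,(1+8c\nu^{1/2}/(\epsilon(1-\delta)^{1/2}))^{1/2}\alpha(\floor*{n/2q})$ after the union bound. There is an extra $\nu^{1/2}$ inside the square root relative to the displayed statement, and it is not removed by any ``elementary simplification of the $\nu$-dependent constants'': as written you prove a marginally weaker bound (the first and third terms match exactly). This is harmless for every downstream use in the paper, but to land on the constant exactly as displayed you must either check it against Bosq's original formulation or restate the second term with the $\nu^{1/2}$. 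A second, cosmetic remark: Lemma~\ref{lem:mixing_transformation} actually gives $\alpha((\xi_t)_{t\in\Z},m)\le\alpha(m-\lag)$, so the coefficient entering the blocking argument is $\alpha(\floor*{n/2q}-\lag)$; the theorem statement suppresses this shift, so your treatment is no less precise than the source.
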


\begin{proof}
    As previously noted, the process $(\xi_t)_{t \in \Z}$ 
    defined by~\eqref{eq:process_shorthand}
    \new{ with the time lag $\eta \in \N$}
    is stationary, centered and almost surely bounded by $2c$ in the
    norm of $\HS{\inrkhs}$.
    Moreover, its
    $\alpha$-mixing coefficients satisfy the bound
    \eqref{eq:mixing_transformation2}.
    We can therefore apply the concentration 
    bound given by~\citet[][Theorem 2.12]{Bosq2000}
    to the process $(\xi_t)_{t \in \Z}$,
    which yields the assertion.
\end{proof}

    The above bound requires an optimal trade-off between
    $\nu$, $q$, and $\delta$.
    In particular, knowledge about the decay of the eigenvalues 
    $(\lambda_j)_{j \geq 1}$ of $\Gamma_{\lag}$
    allows to choose $\nu$, $q$ and $\delta$
    \rev{%
    such a way that the bound can be simplified
    as demonstrated by \citet[Corollary 2.4]{Bosq2000}.
    We also note that 
    $\sum_{j > \nu} \lambda_j < \infty$ for every $\nu \in \N$,
    since $\Gamma_{\lag}$ is trace class.
    }

\section{Consistency of Weakly Dependent Kernel PCA}
\label{sec:kernel_pca}

By considering the kernel covariance operator $\cov:= \cov(0)$,
we can easily obtain consistency results 
for kernel PCA~\citep{Scholkopf98:KPCA} for the case that 
the data is dependent.
It is well 
known that kernel PCA approximates the spectral decomposition of $C$
\citep[see for example][]{Blanchard2007}, as we will 
briefly explain in Section~\ref{sec:pca_interpretation}.
Consistency results for kernel PCA from independent data
have been obtained by considering the spectral perturbation
of covariance operators of Hilbertian random variables
~\citep{Mas2003, Blanchard2007, Mas2015, Koltchinskii2016, Koltchinskii2017, Reiss2020}.
Various approaches exist in this context and we do not
aim to provide a full overview here. Instead, we will show how
our previous results lead to some elementary consistency statements
for dependent data.
By applying techniques from the previously mentioned literature,
these results may be refined and extended accordingly.

We note that convergence in measure and weak convergence of 
standard linear Hilbertian PCA for
$L^2([0,1])$-valued stochastic processes
was previously investigated by \citet{KokoszkaReimherr2013}
under the assumption of $L^4$-$m$ approximability.

\subsection{Notation}
\label{sec:pca_notation}

For a compact self-adjoint positive-semidefinite operator $\cov$ on $\inrkhs$, let
$(\lambda_i(\cov))_{i \in I}$ denote the nonzero eigenvalues of $\cov$ ordered 
nonincreasingly repeated with their multiplicities for the index set 
$I= \{1,2, \dots\}$.
Then $\cov$ admits the spectral decomposition
\begin{equation}
    \label{eq:cov_spectral_decomposition}
\cov = \sum_{i \in I} \lambda_i(\cov)\, v_i \otimes v_i
\end{equation}
where the $v_i$ are the orthonormal eigenfunctions of $\cov$.
In addition, let $(\mu_j(\cov))_{j \in J}$ denote the \emph{distinct}
eigenvalues of $C$ for $J = \{1,2, \dots\}$ with
$\triangle_j(\cov) := \{i \in I \mid \lambda_i(\cov) = \mu_j\}$
as well as the multiplicity $m_j(\cov) := \abs{\triangle_j(\cov)}$.
Note that $\cov$ can also be written as
\begin{equation*}
\cov = \sum_{j \in J} \mu_j(\cov) P_j(\cov),
\end{equation*}
were $P_j(\cov)$ is the orthogonal spectral projector onto the eigenspace
corresponding to $\mu_j(\cov)$ and the convergence is with respect to the operator norm.
We will additionally consider the \emph{spectral gap} 
\begin{equation*}
g_j(\cov) := 
\begin{cases}
\mu_{1}(\cov)- \mu_{2}(\cov), & j = 1, \\
\min \{\mu_{j-1}(\cov) - \mu_{j}(\cov), \, \mu_{j}(\cov)- \mu_{j+1}(\cov) \}, & j \geq 2.
\end{cases}  
\end{equation*}
Note that $g_j(C) \neq 0$ by construction. 

\subsection{Operator Interpretation of Kernel PCA}
\label{sec:pca_interpretation}

Kernel PCA approximates a finite-rank truncation of the \emph{Karhunen--Loève transformation}
of the embedded random variable $\varphi(X_0)$ by approximating the spectral decomposition
of the kernel covariance operator
$\cov = \Ex[ \varphi(X_0) \otimes \varphi(X_0) ]$~\citep[see for example][]{Blanchard2007}.

Consider the spectral decomposition~\eqref{eq:cov_spectral_decomposition}
of $C$.
Let $\{\tilde{v}_i\}_{i\geq 1}$ be an extension of the eigenfunctions 
$\{v_i\}_{i \in I}$ of $C$
to a complete orthonormal system in $\inrkhs$
(that is, the addition of an appropriate ONS spanning the null space of $C$)
By expanding the random variable $\varphi(X_0)$ in terms of 
$\{\tilde{v}_i\}_{i\geq 1}$, we get
the Karhunen--Loève transformation
\begin{equation} \label{eq:staticPCA}
\varphi(X_0) = 
\sum_{i \in I} \innerprod{\varphi(X_0)}{\tilde{v}_i}_\inrkhs \tilde{v}_i
 = \sum_{i \in I} Z_i \tilde{v}_i,
\end{equation}
where $Z_i := \innerprod{\varphi(X_0)}{\tilde{v}_i}_\inrkhs = \tilde{v}_i(X_0)$ are
real-valued random variables and convergence in~\eqref{eq:staticPCA}
is with respect to the norm of $\inrkhs$.
Note that we have
\begin{equation*}
    \Cov[Z_i, Z_j] = \Cov[(\tilde{v}_i(X_0), \tilde{v}_j(X_0)] 
= \innerprod{\tilde{v}_i}{\cov \tilde{v}_j}_{\inrkhs}  = \lambda_i(C) \delta_{ij},
\end{equation*}
where we extend the set of eigenvalues to
the null space, i.e., we set $\lambda_i(C): = 0$ for $i \neq I$.
In practice, the data is usually projected onto the 
first $r$ dominant eigenfunctions in order 
to obtain an optimal low-dimensional approximation of $\varphi(X_0)$.
In particular, for all $r \in I$, the projector
$P_{\leq r} := \sum_{i=1}^r v_i \otimes v_i$
minimizes the reconstruction error
\begin{equation}
    \label{eq:pca_error}
    R(T) := \Ex \left[ \norm{ \varphi(X_0) - T \varphi(X_0) }_\inrkhs^2 \right]
\end{equation}
over all operators $T$ in the set of $r$-dimensional orthogonal
projectors on $\inrkhs$.
By performing a spectral decomposition of the empirical kernel covariance 
operator 
\begin{equation*}
\cov[n] = \frac{1}{n} \sum_{t = 1}^n \varphi(X_t) \otimes \varphi(X_t),
\end{equation*}
kernel PCA aims to approximate~\eqref{eq:staticPCA}
(or $P_{\leq r}$ respectively).
We are therefore interested in how well the spectral decomposition
of the empirical operator $\cov[n]$ approximates the
spectral decomposition of $C$.

\subsection{Consistency Results}
We can now combine typical results
from spectral perturbation theory
with our previous error analysis for $\cov[n]$ 
to obtain consistency statements. Note that we do not aim to provide a full analysis
but rather illustrate how our results can be used to assess the error of kernel PCA with weakly
dependent data. In the independent case, stronger results have been obtained
for example by~\citet{Koltchinskii2016,Koltchinskii2017}, ~\cite{Milbradt2020}
and \citet{Reiss2020}
by directly considering~\eqref{eq:pca_error}.

\begin{remark}[Measurability of spectral properties]
As for example shown by~\cite{Dauxois1982}, the eigenvalues and corresponding 
eigenprojection operators of $\cov$ and $\cov[n]$ are measurable and therefore random variables
on $(\Omega, \sigalg, \Pr)$.
\end{remark}

\begin{theorem}[Spectral perturbation bounds]~\label{thm:spectral_perturbation}
    With the notation of Section~\ref{sec:pca_notation}, it holds that
   \begin{equation*}
       \sup_{i \geq 1}\abs{\lambda_i(\cov) - \lambda_i(\cov[n])} 
       \leq \norm{\cov - \cov[n]} \quad \Pr\textrm{-a.e.}
   \end{equation*}
   as well as
   \begin{equation*}
       \norm{P_j(\cov) - P_j(\cov[n])} 
       \leq \frac{4\norm{\cov - \cov[n]}}{g_j(\cov)}
       \quad \Pr\textrm{-a.e.}
   \end{equation*}
   for all $j \in J$.
\end{theorem}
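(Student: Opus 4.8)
The plan is to reduce both inequalities to classical results from the perturbation theory of compact self-adjoint operators, applied pointwise in $\omega$. Observe first that by Corollary~\ref{cor:SLLN}, the difference $\cov - \cov[n]$ lies in $\HS{\inrkhs}$, hence is bounded, and both $\cov$ and $\cov[n]$ are self-adjoint, positive semidefinite, and compact (in fact trace class). All the spectral quantities in Section~\ref{sec:pca_notation} are therefore well defined for every fixed $\omega$, and by the cited Remark they depend measurably on $\omega$, so it suffices to prove the stated norm inequalities deterministically for an arbitrary pair of such operators and then read them as $\Pr$-a.e.\ statements.

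For the eigenvalue bound, I would invoke the Weyl-type perturbation inequality for compact self-adjoint operators: if $A$ and $B$ are compact self-adjoint operators on a Hilbert space with eigenvalues $(\lambda_i(A))_{i\geq 1}$ and $(\lambda_i(B))_{i\geq 1}$ arranged nonincreasingly (padding with zeros as needed), then $\sup_{i\geq 1}\abs{\lambda_i(A) - \lambda_i(B)} \leq \norm{A - B}$. This is a standard consequence of the min-max characterization of eigenvalues. Applying it with $A = \cov$ and $B = \cov[n]$ gives the first inequality immediately; the only mild bookkeeping point is the convention, already fixed in Section~\ref{sec:pca_notation}, that eigenvalues are extended by zero onto the null space, so that the indexing in the supremum is consistent for both operators.

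For the spectral projector bound, the natural tool is the Davis--Kahan $\sin\Theta$ theorem, or more precisely the resolvent/Riesz-projection form of it. Write the spectral projector $P_j(\cov)$ as a Riesz integral $\frac{1}{2\pi i}\oint_{\gamma_j}(\zeta - \cov)^{-1}\,d\zeta$ over a contour $\gamma_j$ enclosing only the eigenvalue $\mu_j(\cov)$ and separating it from the rest of the spectrum of $\cov$ at distance at least $g_j(\cov)/2$; by the eigenvalue bound just established, once $\norm{\cov - \cov[n]}$ is smaller than $g_j(\cov)/2$ the same contour isolates exactly the corresponding cluster of eigenvalues of $\cov[n]$, so $P_j(\cov[n])$ is given by the integral of $(\zeta - \cov[n])^{-1}$ over the same $\gamma_j$. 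Subtracting, using the resolvent identity $(\zeta-\cov[n])^{-1} - (\zeta-\cov)^{-1} = (\zeta-\cov[n])^{-1}(\cov[n]-\cov)(\zeta-\cov)^{-1}$, and bounding the resolvents on $\gamma_j$ by $2/g_j(\cov)$ yields $\norm{P_j(\cov) - P_j(\cov[n])} \leq \frac{4\norm{\cov-\cov[n]}}{g_j(\cov)}$ after estimating the contour length appropriately; alternatively one cites the Davis--Kahan bound directly in the form $\norm{P_j(\cov)-P_j(\cov[n])}\leq \frac{c\,\norm{\cov-\cov[n]}}{g_j(\cov)}$ with the sharp constant. The main obstacle, and the only place requiring care, is handling the regime where $\norm{\cov-\cov[n]}$ is not small compared to $g_j(\cov)$: there the contour argument breaks down, but the inequality is trivially true since the left-hand side is at most $1$ (difference of orthogonal projectors) while the right-hand side is at least $4$; so one splits into the two cases $\norm{\cov - \cov[n]} \leq g_j(\cov)/4$ and $\norm{\cov - \cov[n]} > g_j(\cov)/4$ and checks the bound holds in each. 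Finally, since $g_j(\cov)$ is a deterministic positive quantity and $\norm{\cov-\cov[n]}\to 0$ a.e., both bounds hold $\Pr$-a.e.\ as stated.
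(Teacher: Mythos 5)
Your proposal is correct and is essentially the paper's route: the paper gives no proof of its own but defers to \citet[Corollary 2.3]{GohbergKrein} for the Weyl-type eigenvalue bound and to \citet[Lemma 1]{Koltchinskii2016} for the projector bound, and your min--max argument and Riesz-projection/resolvent argument with the case split on $\norm{\cov-\cov[n]}$ versus $g_j(\cov)/4$ are precisely how those cited results are established. One small slip: in the large-perturbation regime $\norm{\cov-\cov[n]} > g_j(\cov)/4$ the right-hand side is only guaranteed to exceed $1$, not $4$, but since the difference of two orthogonal projectors has norm at most $1$ this still suffices and the argument goes through unchanged.
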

See \citet[Corollary 2.3]{GohbergKrein} and~\citet[Lemma 1]{Koltchinskii2016} for
proofs of these statements.

The above bounds combined with the strong law of large numbers
from Corollary~\ref{cor:SLLN} for $\norm{\cov - \cov[n]}$ yield consistency results
of kernel PCA with weakly dependent data.

\begin{corollary}[Spectral consistency \& convergence rate]
    \label{cor:spectral_consistency}
    Let $(X_t)_{t \in \Z}$ be stationary and ergodic. Then 
    kernel PCA is strongly consistent in the sense that
    we have spectral convergence
    $\sup_{i \geq 1} \abs{\lambda_i(\cov) - \lambda_i(\cov[n])} \rightarrow 0$
    $\Pr$-a.e.\ as well as
    $\norm{P_j(\cov[n]) - P_j(\cov)} \to 0$
    $\Pr$-a.e.\ for all $j \geq 1$.
    In both cases, convergence takes place with \new{at least the same rate}
    as the convergence $\cov[n] \to \cov$ in operator norm.
\end{corollary}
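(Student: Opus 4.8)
The plan is to derive everything from Corollary~\ref{cor:SLLN} by feeding it into the two deterministic perturbation bounds of Theorem~\ref{thm:spectral_perturbation}. First I would recall that by Corollary~\ref{cor:SLLN}, applied with time lag $\lag = 0$, we have $\cov[n] = \cov[n](0) \to \cov(0) = \cov$ $\Pr$-a.e.\ in $\norm{\cdot}_{\HS{\inrkhs}}$, and hence also in operator norm since $\norm{\cdot} \leq \norm{\cdot}_{\HS{\inrkhs}}$. In particular there is a $\Pr$-null set off which $\norm{\cov - \cov[n]} \to 0$.

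Next, on this almost-sure event, I would invoke the first inequality of Theorem~\ref{thm:spectral_perturbation}, namely $\sup_{i \geq 1}\abs{\lambda_i(\cov) - \lambda_i(\cov[n])} \leq \norm{\cov - \cov[n]}$, which immediately gives $\sup_{i \geq 1}\abs{\lambda_i(\cov) - \lambda_i(\cov[n])} \to 0$ $\Pr$-a.e., and shows that the rate of this convergence is bounded by the rate of $\cov[n] \to \cov$ in operator norm. For the spectral projectors, I would fix $j \in J$ and use the second inequality, $\norm{P_j(\cov) - P_j(\cov[n])} \leq 4\norm{\cov - \cov[n]}/g_j(\cov)$. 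Since $g_j(\cov)$ is a fixed strictly positive constant (recall $g_j(\cov) \neq 0$ by construction, as noted in Section~\ref{sec:pca_notation}) that does not depend on $n$, the right-hand side converges to $0$ $\Pr$-a.e.\ at the same rate as $\norm{\cov - \cov[n]}$, yielding $\norm{P_j(\cov[n]) - P_j(\cov)} \to 0$ $\Pr$-a.e.\ for each $j \geq 1$. Taking a countable union of the null sets over $j \in J$ preserves the almost-sure statement simultaneously for all $j$.

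This argument is essentially a one-line composition of cited results, so there is no real obstacle; the only point requiring a moment of care is the measurability of the eigenvalues and eigenprojectors of $\cov[n]$ (so that the statements are genuine almost-sure statements about random variables), but this is exactly what the preceding Remark on measurability of spectral properties (citing \citet{Dauxois1982}) supplies. I would also remark that the same reasoning extends verbatim to the autocovariance operators $\cov(\lag)$ for $\lag \geq 1$ whenever these are self-adjoint and compact, but since $\cov(\lag)$ need not be self-adjoint for $\lag \geq 1$, I would restrict the statement to $\cov = \cov(0)$ as done here, or else replace eigenvalue/eigenprojector perturbation by singular value perturbation.
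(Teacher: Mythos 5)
Your proposal is correct and matches the paper's (implicit) argument exactly: the corollary is stated as an immediate consequence of combining the deterministic perturbation bounds of Theorem~\ref{thm:spectral_perturbation} with the almost-sure convergence $\cov[n] \to \cov$ from Corollary~\ref{cor:SLLN}, which is precisely what you do. Your additional care about measurability, the countable union of null sets over $j$, and the restriction to the self-adjoint case $\cov = \cov(0)$ is sound but not needed beyond what the paper already notes.
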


\begin{remark}
    The preservation of convergence rates in 
    Corollary~\ref{cor:spectral_consistency} is particularly relevant
    whenever the assumptions of Corollary~\ref{cor:convergence_rate} hold.
    In this situation, the spectral convergence rate is given 
    by Corollary~\ref{cor:spectral_consistency}.
    We note that these results are by no means optimal,
    as they do not consider the full reconstruction error
    of a finite-rank truncation of~\eqref{eq:staticPCA}, like
    for example~\citet{Reiss2020} in the independent case.
    Stronger results can be obtained by accessing
    deeper perturbation results~\citep[see for example][]{Yu2014DavisKahan,Wahl2020}
    and are not in the scope of this work.
\end{remark}

Whenever the estimation error $\norm{\cov - \cov[n]}$ can be
bounded in probability (for example by applying 
Theorem~\ref{thm:finite_sample_bound})
corresponding statements hold for the eigenvalues and spectral projectors
as a result of Theorem~\ref{thm:spectral_perturbation}.

\begin{corollary}[Spectral concentration]
    Let $\Pr\left[\norm{C_n(0) - C(0)} \geq \epsilon \right] \leq f(\epsilon, n) $ for
    some function $f: \R_{>0} \times \N \rightarrow \R_{\geq 0}$.
    Then we have
    \begin{enumerate}
        \item $\Pr \left[ \sup_{i \geq 1} \abs{ \lambda_i(\cov) - \lambda_i(\cov[n] )} 
         \geq \epsilon \right] \leq f(\epsilon, n)$ and
     \item $\Pr \left[ \norm{ P_j(\cov) - P_j(\cov[n])}
         \geq \epsilon \right] \leq f(\frac{g_j(\cov)}{4} \epsilon , n)$ for all $j$.
    \end{enumerate}
\end{corollary}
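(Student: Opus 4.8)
The plan is to obtain both bounds as immediate consequences of the deterministic (in fact $\Pr$-a.e.) spectral perturbation estimates of Theorem~\ref{thm:spectral_perturbation}, combined with nothing more than monotonicity of the probability measure $\Pr$ and the hypothesised tail bound $f$. Before that, I would note that by the Remark on measurability of spectral properties (following~\citet{Dauxois1982}), the quantities $\lambda_i(\cov[n])$ and $P_j(\cov[n])$ are genuine random variables on $(\Omega,\sigalg,\Pr)$, so that all events appearing in the statement are $\sigalg$-measurable; moreover the spectral gaps $g_j(\cov)$ are deterministic strictly positive constants depending only on $\cov$, so that substituting $\tfrac{g_j(\cov)}{4}\epsilon$ for $\epsilon$ in $f$ is meaningful.

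For assertion (1), Theorem~\ref{thm:spectral_perturbation} gives $\sup_{i\geq 1}\abs{\lambda_i(\cov)-\lambda_i(\cov[n])}\leq \norm{\cov-\cov[n]}$ $\Pr$-a.e. Hence, discarding the $\Pr$-null set on which this inequality may fail, the event $\{\sup_{i\geq 1}\abs{\lambda_i(\cov)-\lambda_i(\cov[n])}\geq\epsilon\}$ is contained in $\{\norm{\cov-\cov[n]}\geq\epsilon\}$, and monotonicity of $\Pr$ together with the hypothesis $\Pr[\norm{C_n(0)-C(0)}\geq\epsilon]\leq f(\epsilon,n)$ yields the bound $f(\epsilon,n)$.

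For assertion (2), the same theorem gives $\norm{P_j(\cov)-P_j(\cov[n])}\leq \tfrac{4\norm{\cov-\cov[n]}}{g_j(\cov)}$ $\Pr$-a.e.\ for every $j\in J$. Rearranging, the event $\{\norm{P_j(\cov)-P_j(\cov[n])}\geq\epsilon\}$ is contained, up to a $\Pr$-null set, in $\{\norm{\cov-\cov[n]}\geq\tfrac{g_j(\cov)}{4}\epsilon\}$, so applying the hypothesis with $\tfrac{g_j(\cov)}{4}\epsilon$ in place of $\epsilon$ gives the bound $f(\tfrac{g_j(\cov)}{4}\epsilon,n)$.

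Since each step is merely an elementary set inclusion followed by monotonicity of $\Pr$, there is no substantive obstacle. The only point deserving care — already settled by the cited measurability remark — is that the spectral quantities are measurable, so the probabilities are well-defined, and that one correctly excludes the exceptional null set on which the $\Pr$-a.e.\ perturbation inequalities of Theorem~\ref{thm:spectral_perturbation} might not hold.
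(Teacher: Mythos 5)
Your proposal is correct and coincides with the argument the paper intends (the corollary is stated without an explicit proof precisely because it follows immediately from Theorem~\ref{thm:spectral_perturbation} via the event inclusions you describe). The measurability point and the handling of the exceptional null set are appropriate and complete the argument.
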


\begin{remark}
    We note that in the case of weakly dependent data, the representation~\eqref{eq:staticPCA}
    might not always be a desirable model
    since time-related information in the realization of the process
    $(\varphi(X_t)_{t \in \Z})$ is discarded. As such,
    kernel PCA decomposes the RKHS only with respect to the covariance
    of $\law(\varphi(X_0)) $ instead
    of using autocovariance information from 
    $\law(\varphi(X_{t_1}),\varphi(X_{t_2}),\varphi(X_{t_3}),\dots)$.
    If one is interested in performing a decomposition
    that captures the \emph{dynamic behavior}
    instead of only the \emph{asymptotic spatial behavior},
    different approaches are needed.
    In the context of \emph{functional data analysis},
    the concept of \emph{harmonic PCA} or \emph{dynamic PCA}
    \citep{Panaretos2013a,HoermannDynamicFPCA} yields optimal 
    filter functions to reduce the dimensionality of a
    stationary stochastic process.
    We will address alternative time-based 
    decomposition approaches in Section~\ref{sec:markov_operators}.
\end{remark}

\section{Conditional Mean Embedding of Stationary Time Series}
\label{sec:CME}

We will now show how the previous theoretical results can be used to
obtain consistency results for a large family of
nonparametric time series models.
A wide variety of kernel techniques for sequential data rely on 
the RKHS embedding of the conditional $\lag$-time step transition probability
\begin{equation} \label{eq:transition_probability}
    \Pr[ X_{t + \lag} \in \mathcal{A} \mid X_t], \quad \mathcal{A} \in \sigalg_E,
    \quad \lag \in \N_{>0},
\end{equation}
which is modeled in terms of the \emph{conditional mean embedding}~\citep{SHSF09}.
In what follows, we will briefly outline the different derivations 
of the conditional mean embedding.

Applications of the conditional mean embedding in the context of 
sequential data include, among others, state-space models and 
filtering~\citep{SHSF09,Fukumizu13:KBR,Gebhardt2019Kalman},
the embedding of transition probability 
models~\citep{Songetal10,GrunewalderLBPGJ2012,Nishiyama2012,SunEtAl2019},
predictive state representations~\citep{BootsEtAl2013},
and reinforcement learning models~\citep{VanHoof15,VanHoof17,Stafford:ACCME18,Gebhardt2019Robot}.

\subsection{Operator-theoretic Conditional Mean Embedding}
\label{ssec:CME}
In order to express the transition probability~\eqref{eq:transition_probability} 
in terms of the RKHS~$\inrkhs$,
one is interested in a conditional mean operator 
$U_\lag \colon \inrkhs \supseteq\domain(U_\lag) \rightarrow \inrkhs$
which satisfies 
\begin{equation} \label{eq:CME_operator}
    \innerprod{f}
    { U_\lag \varphi(x)}_\inrkhs 
    = \Ex[f(X_{t + \lag}) \mid X_t = x ], \quad f \in \inrkhs.
\end{equation}
Note that the action of $U_\lag$ 
on $\varphi(x) \in \inrkhs$ 
is interpreted as conditioning on the event $\{ X_{t} = x \}$,
while evaluations of functions $f \in \inrkhs$ with 
$U_\lag \varphi(x)$ under the inner 
product can be interpreted as a 
conditional expectation operator in a weak sense.
It is important to note that such an operator $U_\lag$ does not
exist in general.
By using properties of the kernel covariance operators, it 
can be shown that 
$U_\lag := C(\lag)C(0)^\dagger$ satisfies \eqref{eq:CME_operator} 
under strong technical assumptions
(see~\citealp{Klebanov2019rigorous} for 
details).
We call $U_\lag$ 
the \emph{conditional mean operator} 
and $U_\lag\varphi(x)$ the 
\emph{conditional mean embedding} of the transition
probability 
$\Pr[X_{t + \lag} \in \mathcal{A} \mid X_t = x ]$.
Here, 
$C(0)^\dagger \colon \range(C(0)) \oplus \range(C(0))^\perp \rightarrow \inrkhs$ 
is the \emph{Moore--Penrose pseudoinverse} 
of the operator $C(0)$~\citep[see for example][]{EHN96}.
Note that $U_\lag$ is in general not globally defined and bounded,
i.e., $\range(C(0)) \oplus \range(C(0))^\perp \neq \inrkhs$,
since $\range(C(0))$ is generally not closed. 
\citet{SHSF09} propose the regularized
conditional mean operator
\begin{equation} \label{eq:CME_regularized}
    U_\lag^{(\gamma)} := C(\lag)\left( C(0) + \gamma \id_\inrkhs \right)^{-1},
\end{equation}
with the empirical estimate
$U_\lag^{(\gamma,n)} := C_n(\lag)\left( C_n(0) + \gamma \id_\inrkhs \right)^{-1}$.
Here, $\id_\inrkhs$ denotes the identity operator on $\inrkhs$ and
$\gamma > 0$ is a regularization parameter.
Note that $U_\lag^{(\gamma)}$ as well as $U_\lag^{(\gamma,n)}$ are 
always well-defined Hilbert--Schmidt operators on $\inrkhs$.
\citet{SHSF09}, \citet{Fukumizu13:KBR}, and \citet{Fukumizu15:NBI}
examine convergence of this estimate for the case of independent 
data pairs from the joint distribution of $X_t$ and $X_{t + \lag}$
and show weak consistency with different rates under various technical assumptions.
We extend these results to the case of dependent data.

Since the assumptions for this operator-theoretic framework
and especially the analytical existence of $U_\lag$
are hard to verify, different interpretations have emerged. 
In settings where~\eqref{eq:CME_operator} does not have an analytical solution,
the regularized estimate \smash{$U_\lag^{(\gamma,n)}$}
minimizes an empirical risk functional,
which we will briefly outline below.

\subsection{Least-squares Conditional Mean Embedding.}
In cases when $U_\lag$ is not globally defined and bounded, it is natural to
approximate a smooth solution to~\eqref{eq:CME_operator} by minimizing the
regularized
risk functional 
$\mathcal{E}_\lag^{(\gamma)} \colon \HS{\inrkhs} \rightarrow \R$ 
given by 
\begin{equation} \label{eq:CME_ansatz}
    \mathcal{E}_\lag^{(\gamma)}(A) := 
\sup_{\substack{f \in \inrkhs \\ \norm{f}_\inrkhs = 1}}
\Ex\left[ ( \Ex[f(X_{t + \lag}) \mid X_t] - 
\innerprod{f}{A\varphi(X_t)}_\inrkhs )^2 \right] + \gamma \norm{A}^2_{S_2(\inrkhs)},
\end{equation}
where $\gamma > 0$ is a regularization parameter.
As first shown by \citet{Gruen12}
and recently investigated by \citet{Park2020MeasureTheoretic}
and \citet{Mollenhauer2020Nonparametric},
the risk $\mathcal{E}_\lag^{(\gamma)}(A)$
can be bounded from above by the \emph{surrogate risk}
\begin{equation}\label{eq:CME_objective1}
    R_\lag^{(\gamma)}(A) := 
\Ex[ \norm{ \varphi( X_{t + \lag} ) - A\varphi(X_t) }_\inrkhs^2 ] + \gamma \norm{A}^2_{S_2(\inrkhs)},
\end{equation} \new{
which admits the minimizer 
$U_\lag^{(\gamma)}$ \citep{Mollenhauer2020Nonparametric}.}
The corresponding \emph{empirical surrogate risk}
\begin{equation}
    \label{eq:empirical_CME_risk}
    R_\lag^{(\gamma,n)}(A) := \frac{1}{n} \sum_{t = 1}^n 
    \norm{ \varphi(X_{t + \lag}) - A \varphi(X_t) }^2_\inrkhs
     + \gamma \norm{A}^2_{S_2(\inrkhs)}
 \end{equation} attains its minimum at 
 $U_\lag^{(\gamma,n)}$. 
\new{
We do not aim to cover all the
mathematical intricacies of the CME and its connection to least
squares regression here and
refer the reader to \citet{Park2020MeasureTheoretic} and
\citet{LiEtAl2022}
for more details.
}

\subsection{Kernel Sum Rule}
In the well-specified operator-theoretic setting of \eqref{eq:CME_operator},
\citet[][Theorem 2]{Fukumizu13:KBR} show that the conditional mean operator
$U_\lag$ satisfies the more general 
so-called \emph{kernel sum rule}, which is widely used
in nonparametric Bayesian models, especially time series filtering.
That is, for a \emph{prior} measure $z$ on $(E, \sigalg_E)$ 
satisfying the integrability $\int_E \norm{\varphi(Z)}_\inrkhs \dd z(Z) < \infty$ 
with a kernel mean embedding
$\mu_z = \int \varphi(Z) \dd z(Z)$ such that
$\mu_z \in \domain(C(0)^{\dagger})$,
we have
\begin{equation} \label{eq:kernel_sum_rule}
    \innerprod{f}{U_\lag\mu_z}_{\inrkhs} 
    = \int_{E}  \Ex[f(X_{t + \lag}) \mid X_t = x ] \, \dd z(x), \quad f \in \inrkhs.
\end{equation}
Note that the conditional mean property~\eqref{eq:CME_operator} is in fact a 
special case of the kernel sum rule
when $z$ is the Dirac measure at $x$, i.e., $\mu_z = \varphi(x)$.
In applications, the embedded prior 
$\mu_z$ is usually estimated empirically by sampling from $z$. 
When $\emu_z$ is any kind of consistent estimate of $\mu_z$, we
obtain the plug-in estimator $U_\lag^{(\gamma,n)} \emu_z$ for $U_\lag\mu_z$.

\subsection{Consistency Results}
We now outline how our previous results allow to formulate consistency results for
the kernel sum rule for dependent data.
Prior consistency results for the 
operator-based setting
of the conditional mean embedding and the kernel sum rule are limited to
independent data pairs. 
Note again that a drawback of our approach is the typical
assumption that the analytic expression 
$U_\lag\mu_z$ (and in particular $U_\lag\varphi(x)$) exists in
$\inrkhs$~\citep[see][]{Klebanov2019rigorous}, while a focus on the minimization
properties allows to relax this assumption and consider convergence to a 
best approximation under the corresponding risk.
We start by giving a generic error 
decomposition for the kernel sum rule in a form that 
admits the immediate application of our previous results.

\begin{theorem}[Kernel sum rule error] \label{thm:kernel_sum_rule_error}
    Let $z$ be a prior finite measure on $(E, \sigalg_E)$ and a 
    kernel $k \colon E \times E \rightarrow \R$ with 
    $\sup_{x \in E} k(x,x) = c < \infty$
    such that the kernel sum rule~\eqref{eq:kernel_sum_rule} 
    applies 
    \new{%
       for a fixed time lag $\eta \in \N_{>0}$
    }
    (in particular, $\mu_z \in \domain(C(0)^\dagger)$).
    Then the empirical estimate $U_\lag^{(\gamma,n)} \emu_z$ 
    admits the total error bound
    \begin{equation}\label{eq:kernel_sum_rule_error}
        \norm{U_\lag^{(\gamma,n)} \emu_z - U_\lag \mu_z}_\inrkhs 
        \leq e_s(\mu_z,\emu_z,n, \gamma) + 
        e_r(\mu_z,\gamma) \quad \Pr\textrm{-a.e.}
    \end{equation}
    with the stochastic \emph{estimation error}
    \begin{equation*}
    e_s(\mu_z,\emu_z,n, \gamma) := \frac{c}{\gamma} \norm{\emu_z - \mu_z}_\inrkhs +
    \frac{c^{3/2}}{\gamma^2} \norm{C_n(0) - C(0)} +
    \frac{c^{1/2}}{\gamma} \norm{C_n(\lag) - C(\lag) }
    \end{equation*}
    and the deterministic \emph{regularization error}
    \begin{equation*}
    e_r(\mu_z,\gamma) := c \norm{(C(0) + \gamma \idop[\inrkhs])^{-1} \mu_z
        - C(0)^\dagger \mu_z}_\inrkhs.
    \end{equation*} 
\end{theorem}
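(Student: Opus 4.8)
The plan is to bound the error by inserting two intermediate quantities: the regularized population object $U^{(\gamma)}\mu_z$ and the regularized empirical object $U_n^{(\gamma)}\mu_z$, and then apply the triangle inequality. Concretely, I would write
\begin{align*}
\norm{U_n^{(\gamma)} \emu_z - U\mu_z}_\inrkhs
&\leq \norm{U_n^{(\gamma)} \emu_z - U_n^{(\gamma)} \mu_z}_\inrkhs
 + \norm{U_n^{(\gamma)} \mu_z - U^{(\gamma)} \mu_z}_\inrkhs \\
&\quad + \norm{U^{(\gamma)} \mu_z - U\mu_z}_\inrkhs.
\end{align*}
The last term is exactly the deterministic regularization error: since $U^{(\gamma)} = C(\lag)(C(0)+\gamma\id_\inrkhs)^{-1}$ and $U\mu_z = C(\lag)C(0)^\dagger \mu_z$ (using $\mu_z \in \domain(C(0)^\dagger)$ so that the kernel sum rule applies), we have $\norm{U^{(\gamma)}\mu_z - U\mu_z}_\inrkhs = \norm{C(\lag)\big((C(0)+\gamma\id_\inrkhs)^{-1}\mu_z - C(0)^\dagger\mu_z\big)}_\inrkhs \leq \norm{C(\lag)} \norm{(C(0)+\gamma\id_\inrkhs)^{-1}\mu_z - C(0)^\dagger\mu_z}_\inrkhs$, and $\norm{C(\lag)} \leq \norm{C(\lag)}_{\HS{\inrkhs}} \leq c$ by the uniform kernel bound (cf.\ \eqref{eq:bounded-process} type estimates), giving $e_r(\mu_z,\gamma)$.

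For the first term I would use $\norm{U_n^{(\gamma)}} \leq \norm{C_n(\lag)}\,\norm{(C_n(0)+\gamma\id_\inrkhs)^{-1}} \leq c \cdot \gamma^{-1}$ — here the operator-norm bound on the resolvent is $\gamma^{-1}$ since $C_n(0)$ is positive semidefinite, and $\norm{C_n(\lag)} \leq c$ since each summand $\varphi(X_{t+\lag})\otimes\varphi(X_t)$ has Hilbert--Schmidt norm at most $c$. Hence this term is $\leq (c/\gamma)\norm{\emu_z - \mu_z}_\inrkhs$, the first piece of $e_s$. For the middle term, the difference $U_n^{(\gamma)} - U^{(\gamma)}$ must be expanded via the standard resolvent/covariance perturbation identity: writing $R_n := (C_n(0)+\gamma\id_\inrkhs)^{-1}$ and $R := (C(0)+\gamma\id_\inrkhs)^{-1}$,
\[
U_n^{(\gamma)} - U^{(\gamma)} = (C_n(\lag) - C(\lag)) R_n + C(\lag)\big(R_n - R\big),
\]
and $R_n - R = R_n (C(0) - C_n(0)) R$. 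Applying the submultiplicativity of the operator norm with $\norm{R_n}, \norm{R} \leq \gamma^{-1}$ and $\norm{C(\lag)} \leq c$, $\norm{\mu_z}_\inrkhs \leq c^{1/2}$ (again from the kernel bound, $\norm{\mu_z}_\inrkhs \leq \Ex\norm{\varphi(Z)}_\inrkhs \leq c^{1/2}$ when $z$ is a probability measure; for a general finite measure one absorbs the total mass into the constant), one collects the terms $\tfrac{c^{1/2}}{\gamma}\norm{C_n(\lag)-C(\lag)}$ and $\tfrac{c^{3/2}}{\gamma^2}\norm{C_n(0)-C(0)}$, which are the remaining two pieces of $e_s$.

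The main obstacle is bookkeeping rather than conceptual: one must keep track of which bounds use the operator norm versus the Hilbert--Schmidt norm (the error terms in $e_s$ are stated with $\norm{\cdot}$, the operator norm, which is consistent since $\norm{C_n-C} \leq \norm{C_n-C}_{\HS{\inrkhs}}$ and our asymptotic/concentration results in earlier sections control the HS norm), and must verify that the resolvent perturbation identity is applied in the correct order so that every factor carries either a $\gamma^{-1}$ from a resolvent or a $c$ (resp.\ $c^{1/2}$) from a covariance operator or embedded mean. A minor subtlety is the $\Pr$-a.e.\ qualifier: the bound \eqref{eq:bounded-process} on $\norm{C_n(\lag)}$ holds $\Pr$-a.e., so the whole chain of inequalities, and hence \eqref{eq:kernel_sum_rule_error}, holds $\Pr$-a.e.\ as claimed. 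No deeper analytic input is needed beyond the kernel sum rule assumption, which is taken as a hypothesis.
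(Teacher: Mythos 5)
Your proposal is correct and follows essentially the same route as the paper: a triangle-inequality decomposition through the intermediate points $U_n^{(\gamma)}\mu_z$ and $U^{(\gamma)}\mu_z$, the resolvent identity $R_n - R = R_n(C(0)-C_n(0))R$, and the elementary bounds $\norm{C_n(\lag)},\norm{C(\lag)}\leq c$, $\norm{R_n},\norm{R}\leq\gamma^{-1}$, $\norm{\mu_z}_\inrkhs\leq c^{1/2}$. The only (immaterial) difference is which factor you pair with the resolvent perturbation — you keep $C(\lag)(R_n-R)$ where the paper keeps $C_n(\lag)(R_n-R)$ — and your remark about absorbing the total mass of a non-probability prior $z$ into the constant is a fair point the paper glosses over.
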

The proof for Theorem~\ref{thm:kernel_sum_rule_error} can be
found in Appendix~\ref{app:proofs}.
\new{%
In the context of inverse problems,
the estimation error $e_s$ 
is sometimes also called the \emph{sample error},
while the regularization error is
sometimes also called the \emph{approximation error}.
}

\begin{remark}[Kernel sum rule error]
    \label{rem:sum_rule_error}
    The error decomposition~\eqref{eq:kernel_sum_rule_error} leads to the following insights:
    \begin{enumerate}[label=(\roman*)]
        \item \label{rem:sum_rule_error_regularization}
        The deterministic regularization error $e_r(\mu_z,\gamma)$ 
        captures the analytic nature of the
        inverse problem described by $C(0)u = \mu_z$ for $u \in \inrkhs$.
        As such, it is not affected by any estimation.
        Note that as $\gamma \to 0$, it holds that 
        $e_r(\mu_z,\gamma) \rightarrow 0$ for every
        $\mu_z \in \domain(C(0)^\dagger)$.  
        For details, we refer the reader to~\citet{EHN96}.
        In practice, regularization is needed since for estimated
        right-hand sides $\emu_z$, the condition 
        $\emu_z \in \domain(C(0)^\dagger)$ 
        is in general not true---even if 
        $\mu_z \in \domain(C(0)^\dagger)$.
        The convergence rate of $e_r(\mu_z,\gamma)$
        depends on the eigendecomposition of $C(0)$ and can be 
        assessed under additional assumptions about the decay rate of the eigenvalues.
        However, in general the convergence of the regularization error 
        can be arbitrarily slow without 
        additional assumptions, see~\citet{Schock1984}.
        \item For convergence of the total error~\eqref{eq:kernel_sum_rule_error},
        we need the two simultaneous conditions $e_s(\mu_z,\emu_z,n, \gamma) \to 0$
        and $e_r(\mu_z,\gamma) \to 0$ as $n \to \infty$, $\gamma \to 0$ and
        $\emu_z \to \mu_z$.
        The typical trade-off between regularization error
        and estimation error is reflected in this fact.
        Our previous convergence results for the individual estimation errors
        of $\cov[n](0)$ and $\cov[n](\lag)$ allow to bound $e_s(\mu_z,\emu_z,n, \gamma)$
        and derive regularization schemes
        $\gamma:=\gamma(n,\mu_z,\emu_z)$ depending on the trajectory length $n$
        and the quality of the prior estimate $\emu_z$.
        Informally speaking, the individual estimation errors
        must tend to $0$ faster than the regularization term, so $\gamma$ should
        not be allowed to converge ``too fast'' with respect to 
        the rate of increasing sample size $n$ -- this is the typical
        setting in the theory of inverse problems and regularization.
          \end{enumerate}
\end{remark}

By incorporating additional knowledge about the convergence behavior of 
$\cov[n](0)$ and $\cov[n](\lag)$ from our previous results, Theorem~\ref{thm:kernel_sum_rule_error}
yields convergence rates of the estimation error $e_s$ as well
as admissible regularization schemes. We give an example below.
For simplicity, we assume that $\cov[n](0)$ and $\cov[n](\lag)$ are estimated independently,
which would of course require two realizations of length $n$ of $(X_t)_{t \in \Z}$.
To illustrate the idea, we require $\emu_z$ to converge with a $\Pr$-a.e.\ rate of
$1/\sqrt{n}$, which we tie to the number of samples available for the estimation of
$\cov[n](0)$ and $\cov[n](\lag)$ in order to avoid additional symbols for different samples.
Our general approach presented here 
still applies when the convergence rate of $\emu_z$ is slower.

\begin{example}[Kernel sum rule consistency]
    \label{ex:sum:rule}
    Let $C_n(0)$ and $C_n(\lag)$ be estimated independently
    from samples $X_1, \dots, X_n$.
    Assume that we have the prior convergence rate
    \begin{equation*}
        \norm{\mu_z - \emu_z}_\inrkhs = O(1 / \sqrt{n})
        \quad \Pr\textnormal{-a.e.}
    \end{equation*}
    \rev{%
    Then under the conditions of Theorem~\ref{thm:clil},}
    we have
    \begin{equation*}
        e_s(\mu_z,\emu_z,n, \gamma) 
        \rev{%
        = O
        \left( \frac{ \sqrt{ 2 L(L(n) } )}{ \sqrt{n} \gamma^2} 
        \right)
        }
        \quad \Pr\textnormal{-a.e.}
   \end{equation*}
   In particular, for every regularization scheme $\gamma = \gamma(n)$
   such that 
   \begin{equation*}
       \gamma(n) \to 0 \textrm{ as well as }
       \rev{%
       \frac{ \sqrt{2 L(L(n))}}{ \sqrt{n} \gamma(n)^2} \to 0
       }
        \quad \Pr\textnormal{-a.e.}
   \end{equation*}
   for $n \to \infty$,
   we have the overall convergence
   \begin{equation*}
       U_\lag^{(\gamma(n),n)} \emu_z \to U_\lag\mu_z 
        \quad \Pr\textnormal{-a.e.}
   \end{equation*}
    in the norm of $\inrkhs$.
   \new{%
    Note that this follows 
    since we have already covered the convergence
    of the regularization error 
    $e_r(\mu_z,\gamma(n)) \to 0$ in 
   Remark~\ref{rem:sum_rule_error}\ref{rem:sum_rule_error_regularization}.
   }
\end{example}

\begin{remark}[Counterfactual mean embedding]
\new{%
    The results highlighted in this section
    require the fairly restrictive assumptions of the kernel sum rule
    as originally formulated by \cite{Fukumizu13:KBR}.
    As \citet{Muandet2021} have recently shown,
    the kernel sum rule estimator can
    be interpreted in the context of the so-called 
    \emph{counterfactual mean embedding},
    where a convergence analysis can be based
    on the convergence of empirical kernel covariance operators
    under significantly weaker assumptions.
    Our results from the previous sections
    can be readily applied to the
    convergence analysis presented by \citet[Section 4]{Muandet2021},
    leading to non-i.i.d.\ results in the framework
    of causal inference.
    }
\end{remark}

\section{Nonparametric Estimation of Markov Transition Operators}
\label{sec:markov_operators}

As the last application of our theory, we will briefly show how
the risk functional~\eqref{eq:CME_ansatz} yields a nonparametric 
model for the estimation of 
\emph{Markov transition operators}.
Moreover, we elaborate on the recent discovery that this model is
actually the theoretical foundation of a well-known
family of several data-driven methods
for the analysis of dynamical systems~\citep{Klus2019, Mollenhauer2020Nonparametric}.
We only highlight immediate consequences of this approach and 
emphasize that several theoretical questions need to be 
answered separately in a vector-valued 
statistical learning context \citep{Park2020MeasureTheoretic, Mollenhauer2020Nonparametric}.
The aim of this section is to draw attention to the fact that
statistical tools like strong mixing coefficients can
be used to show consistency for a range of numerical methods used
in other scientific disciplines.
\new{%
For the mathematical background of discrete-time Markov processes
and Markov transition operators, we refer the reader to \citet{Douc2018Markov}.
}

In what follows, we assume 
that $(X_t)_{t \in \Z}$ is a stationary Markov process, i.e., it holds
\begin{equation*}
\Ex\left[ f(X_{s}) \mid \sigalg_{- \infty}^t \right] = 
\Ex\left[ f(X_{s}) \mid \sigma(X_t) \right]
\end{equation*}
for all bounded measurable functions $f \colon E \rightarrow \R$ and
times $s \geq t$.
For a fixed time lag $\lag \in \N_{>0}$, 
the \emph{transition operator}, \emph{(backward) transfer operator}\footnote{The name
    \emph{backward transfer operator} 
    is classically used in the context
    of continuous-time processes, where it is used to describe
    the solution to the 
    \emph{backwards Kolmogorov equation}.
    In the theory of dynamical systems, the
    term \emph{Koopman operator} is commonly used.} or 
\emph{(stochastic) Koopman operator} $\ko_\lag$ is defined by the relation
\begin{equation} \label{eq:koopman}
(\ko_\lag f)(x) = \Ex[f(X_{t + \lag}) \mid X_t = x] 
\end{equation}
for all functions $f: E \rightarrow \R$ in some 
appropriately chosen subspace $F$ of
measurable functions.
It 
describes the propagation of observable functions in $F$ by the time step
$\lag$ under the dynamics given by $(X_t)_{t \in \Z}$.

\new{
The Markov transition operator is a fundamentally important tool 
for the analysis of various properties of Markov processes,
Markov chain Monte Carlo methods, and dynamical systems.
For instance, it is known that the
spectrum of $\ko_\lag$ and the associated eigenfunctions determine
a crucial set of related properties of the underlying dynamics such as ergodicity, speed of
convergence, the decomposition of the state space into almost
invariant components 
(so-called \emph{metastable states}), 
several contraction and concentration results and many more
(see \citealt{MeTw12, Rudolf2012Diss,Bovier2016Metastability, Douc2018Markov} 
and the literature reviews therein).
}

By simply switching to
the adjoint of $A$ in the expression for $\mathcal{E}_\lag^{(\gamma)}(A)$ 
defined in~\eqref{eq:CME_ansatz} and using
the reproducing property of $\inrkhs$, we have
\begin{equation*} 
    \mathcal{E}_\lag^{(\gamma)}(A) 
    := \sup_{\substack{f \in \inrkhs \\ \norm{f}_\inrkhs = 1}}
    \Ex\left[ ( \Ex[f(X_{t + \lag}) \mid X_t] - 
(A^*f)(X_t) )^2 \right] + \gamma \norm{A^*}^2_{S_2(\inrkhs)}.
\end{equation*} 
As a result, we can immediately interpret the
adjoint of the conditional mean operator
\begin{equation*} 
    U_\lag^{(\gamma)*} = 
 \left( C(0) + \gamma \id_\inrkhs \right)^{-1} C(\lag)^*
\end{equation*}
as a smooth approximation of the transition operator $\ko_\lag$ on
the class of RKHS functions $F = \inrkhs$
with empirical estimate
\begin{equation*}
U_\lag^{(\gamma,n)*} = 
\left( C_n(0) + \gamma \id_\inrkhs \right)^{-1} C_n(\lag)^*.
\end{equation*}
Note that all of our consistency results for kernel autocovariance
operators and the conditional mean embedding transfer directly to 
this setting, as operator norm error bounds
for the estimate of $U_\lag^{(\gamma)}$ are also valid for its adjoint.
The approximation-theoretic details
of this approach are investigated by \citet{Mollenhauer2020Nonparametric}.

The idea of approximating the operator $\ko_\lag$
via $U_\lag^{(\gamma)*}$ can be connected to 
data-driven spectral analysis techniques 
which are commonly used in  fluid dynamics, 
molecular dynamics, and atmospheric sciences. 
One of the most widely used spectral analysis and 
methods is the so-called
\emph{extended dynamic mode decomposition}~(EDMD,\citealp{WKR15}).
EDMD computes Galerkin approximation of the eigendecomposition of $\ko_\lag$
based on a finite set of basis functions in $F$.
When $F$ is chosen to be the RKHS $\inrkhs$,
one obtains \emph{kernel EDMD}~\citep{WRK15:Kernel} as a special 
nonparametric version of EDMD.
It was shown by~\citet{Klus2019} that regularized kernel EDMD 
actually computes the eigendecomposition 
of~$\smash{U_\lag^{(\gamma,n)*}}$.
In contrast to previous results that rely on 
ergodicity of the underlying 
system~\citep{KKS16,KoMe18}, 
our results may lead to a refined convergence analysis
by using mixing properties of the underlying system.
To the best of our knowledge, prior consistency results for EDMD
only cover convergence in
strong operator topology 
(i.e., pointwise convergence) for parametric models, i.e. 
on fixed finite-dimensional subspaces
spanned by a dictionary of basis functions.
Furthermore, they mostly aim towards deterministic dynamical
systems~\citep{KoMe18}. 
However, we note that the operator 
\smash{$U_\lag^{(\gamma)*}$}
is in general not self-adjoint and a dedicated analysis
of spectral properties and convergence is subject to future work.

Additionally, it is known that the operator $\ko_\lag$ and its adjoint,
the so-called \emph{Perron--Frobenius} operator, can be connected
to the solution of the so-called 
\emph{blind source separation problem}~\citep{KNKWKSN18,Klus2019}.
In fact, eigenfunctions of compositions of empirical
autocovariance operators (and their pseudoinverses)
are used as projection coordinates in a kernel-based variant of 
\emph{independent component analysis}~\citep{HZKM03,SP15}.
As such, consistency results formulated by us 
may be used to prove convergence for these approaches.

\section{Conclusion}
\label{sec:conclusion}
In this paper, we provided a mathematically rigorous analysis of
kernel autocovariance operators and established classical limit theorems
as well as nonasymptotic error bounds under classical ergodic and
mixing assumptions. The results were mostly derived from theoretical 
work on discrete-time processes in
Hilbert spaces and are presented in a form such that they
can be easily applied in the context of RKHS-based time series models
and frequency domain analysis.
We highlighted high-level applications for kernel PCA,
the conditional mean embedding, and
the nonparametric estimation of Markov transition operators.
The theory of vector-valued statistical learning
from dependent data may be connected to our considerations in future work.
In the context of learning Markov transition operators, the
kernel autocovariance operator may lead to an inverse problem 
that describes the analytical
minimizer of an autoregression risk in an operator space.

\section*{Acknowledgements}
	We greatfully acknowledge funding from Germany’s  Excellence  Strategy  (MATH+: The Berlin Mathematics Research Center, EXC-2046/1, project ID:390685689, project AA1-2).
    SK and PK are partially supported by the Deutsche Forschungsgemeinschaft (DFG), CRC1114, projects B06 and A01.
    The authors wish to thank Ilja Klebanov for his helpful comments,
    \new{and the anonymous reviewers who helped to improve the manuscript}.

\appendix

\section{Proofs}
\label{app:proofs}

We report the proofs which were omitted in the main text.

\subsection{Proof of Theorem~\ref{thm:clil}}
    \label{proof:clil}
    We apply Theorem 2 of~\citet{Merlevede2008:LIL} 
    to the process $(\xi)_{t \in \Z}$
    \new{%
        defined by \eqref{eq:process_shorthand}
        for the fixed time lag $\lag \in \N$.
    }
    This results verifies the existence of
    a compact set $K_\lag$ with the desired properties such that
    both~\eqref{eq:LIL-lim} and~\eqref{eq:LIL-acc} hold.
    \new{%
    We note that \citet[Remark 3]{Merlevede2008:LIL}
	ensures that our assumptions allow the application of this result.
    }
    It now remains to show the norm bound~\eqref{eq:LIL-bound} for $K_\lag$.
    The set $K_\lag$ is the unit ball of the Hilbert space 
    $\mathbb{H}_\lag \subseteq S_2(\inrkhs)$, 
    which is given by the completion of the range of 
    $T_\lag^{1/2}$ 
    (where $T_\lag$ is given by~\eqref{eq:asymptotic_cov_operator} and
    $T_\lag^{1/2}$ denotes its operator square root)
    with respect to the inner product defined by
    \begin{equation} \label{eq:induced-rkhs-innerprod}
        \innerprod{T_\lag^{1/2}A}{T_\lag^{1/2}B}_{\mathbb{H}_\lag} 
        := \innerprod{A}{B}_{\HS{\inrkhs}},\quad A,B \in \HS{\inrkhs}.
    \end{equation}
    The space $\mathbb{H}_\lag$ is also called \emph{Cameron--Martin space} or 
    \emph{abstract Wiener space}
    \new{in the context of Gaussian measures}~\citep%
    [for details, we refer the reader to][Chapter 2]{BogachevGaussianMeasures}.
    \new{%
        It is noteworthy that
        $\mathbb{H}_\lag$ itself is sometimes also called
        \emph{reproducing kernel Hilbert space}
        (associated with the so-called \emph{covariance kernel}
        defined by $T_\lag$, see \citealp{Merlevede2008:LIL}, Theorem 2),
        which may seem misleading in our context
        at first glance.
        In particular, note that the space
        $S_2(\inrkhs)$ consists of
        Hilbert--Schmidt operators and not of real-valued functions
        as in our definition of an RKHS in this paper.
        The interpretation of $\mathbb{H}_\lag$ as a reproducing kernel
        Hilbert space requires an identification of
        $\mathbb{H}_\lag$ with its dual space
        as real-valued functions contained
        in the space $L^2(\mathcal{N}(0,T))$
        as described by
        \citet[Remark 2.2.3]{BogachevGaussianMeasures}.
        The final connection between this setting
        and our definition of an RKHS
        is made clear by Mercer's theorem and the related theory of
        integral operators \citep[Chapter 4.5]{StCh08}.
        We will not cover all the mathematical details here and
        refer the reader to more 
        comprehensive treatments below.
    }

    For a technical construction of $\mathbb{H}_\lag$ and the limit set $K_\lag$ 
    in the law of the iterated logarithm in Banach spaces, 
    we refer the reader to~\citet[][Section 2]{Kuelbs1976} 
    as well as~\citet[][Section 2]{Goodman1981}.
    Note that these references
    elaborate on the i.i.d.\ case. 
    However, for the construction of $\mathbb{H}_\lag$ and $K_\lag$ 
    only an abstract limiting probability measure is needed,
    which is given by the Gaussian measure obtained from Theorem~\ref{thm:CLT}
    and its covariance operator $T_\lag$ 
    defined by~\eqref{eq:asymptotic_cov_operator},
    just as shown in the proof 
    of~\citet[][Theorem~2]{Merlevede2008:LIL}.
    We can therefore analyze properties of
    $K_\lag$ by considering the Cameron--Martin space of the 
    centered Gaussian measure induced by $T_\lag$, which is examined in the
    previously mentioned literature.
    The identity~\eqref{eq:induced-rkhs-innerprod} 
    can be verified by translating the
    abstract Banach space definition of
    \cite[][Equation 2.3]{Kuelbs1976} to our scenario of the 
    separable Hilbert space $\HS{\inrkhs}$ 
    as, for example, 
    described by~\citet[][Remark 2.3.3]{BogachevGaussianMeasures}.
    
    From~\eqref{eq:induced-rkhs-innerprod}, we obtain 
    \begin{equation} 
        \label{eq:LIL-RKHS-norm-bound}
        \norm{A}_{\HS{\inrkhs}} 
        \leq \norm{T_\lag^{1/2}} \norm{A}_{\mathbb{H}_\lag}, \quad A \in \mathbb{H}_\lag.
    \end{equation} 
    Since $K_\lag 
    = \{ A \in \mathbb{H}_\lag \mid {\norm{A}_{\mathbb{H}_\lag}} \leq 1 \}$,
    a bound for $\norm{T_\lag^{1/2}} = \norm{T_\lag}^{1/2}$ 
    depending on the mixing rate of $(\xi_t)_{t \in \Z}$ is sufficient
    in order to provide a bound for elements of $K$ in the norm of $\HS{\inrkhs}$. 
    
    We now give a norm bound for
    $T_\lag = \Ex[ \xi_0 \otimes \xi_0 ] + \sum_{t = 1}^\infty \Ex[ \xi_0 \otimes \xi_t ] +
    \sum_{t = 1}^\infty \Ex[ \xi_t \otimes \xi_0 ] $.
    We clearly have 
    \begin{equation*}
        \norm{\Ex[ \xi_0 \otimes \xi_0 ]} \leq 4c^2,  
    \end{equation*} since $\xi_0$ is almost surely bounded by $2c$
    by~\eqref{eq:bounded-process}.
   
    Let $\alpha(n)$ be the mixing coefficients of $(X_t)_{t \in \Z}$.
    We now note that by~\eqref{eq:mixing_transformation2}, we have 
    $\alpha((\xi_t)_{t \in \Z}, n) \leq \alpha(n-\lag)$
    for all $n \in \N$.
    This allows to give a bound for the two remaining summands of $T$: 
    \begin{align*}
        \norm { \sum_{t = 1}^\infty \Ex[ \xi_t \otimes \xi_0 ] }
        &\leq \sum_{t = 1}^\infty  \norm { \Ex[ \xi_t \otimes \xi_0 ] } \\
        &=  \sum_{t = 1}^\infty \sup_{ \substack{\norm{A}_{\HS{\inrkhs}} 
        = 1 \\ \norm{B}_{\HS{\inrkhs}} = 1}}
        \abs{ \Ex[ \innerprod{\xi_t}{B} \innerprod{\xi_0}{A} ] } \\
        &\leq \sum_{t = 1}^\infty \sup_{ \substack {\norm{A}_{\HS{\inrkhs}} = 1 \\ 
        \norm{B}_{\HS{\inrkhs}} = 1}}
        4 \, \alpha(\sigma(\xi_t), 
            \sigma(\xi_0)) \norm{\innerprod{\xi_t}{B}}_{L^\infty(\Pr)}
        \norm{\innerprod{\xi_0}{A}}_{L^\infty(\Pr)} \\
        &\leq \sum_{t = 1}^\infty 16\, c^2 \alpha(t - \lag)  = 16\, c^2 M,
    \end{align*}
    where we use Ibragimov's covariance inequality for strongly mixing and
    bounded random variables \citep[Lemma 1.2]{Ibragimov1962} in the third step (note 
    that $\innerprod{\xi_t}{B}$ and $\innerprod{\xi_0}{A}$ are centered real-valued
    random variables which are $\Pr$-a.e.\ bounded by $2c$
    because of Equation~\ref{eq:bounded-process}).
    By symmetry, we obtain the same bound for 
    $\norm { \sum_{t = 1}^\infty \Ex[ \xi_0 \otimes \xi_t ] }$
    and we end up with the total norm bound
    \begin{equation*}
        \norm{T_\lag} \leq 4c^2 + 32\, c^2 M ,
    \end{equation*} which proves the claim 
    in combination with~\eqref{eq:LIL-RKHS-norm-bound}.
    \QED

\subsection{Proof of Theorem~\ref{thm:kernel_sum_rule_error}}

    Note that since $\sup_{x \in E} k(x,x) = c < \infty$,
    we have the $\Pr$-a.e.\ bounds $\norm{\mu_z}_\inrkhs \leq c^{1/2}$ as well as $\norm{C(\lag)} \leq c$
    for all $\lag \in \N$.
    Additionally, the regularized inverse
    can be bounded as $\norm{(C(0) + \gamma \id_\inrkhs)^{-1}} \leq \frac{1}{\gamma}$, 
    which is easy to see from
    the corresponding spectral decomposition.
    These bounds hold analogously for the empirical versions of all above objects.\!\footnote{For
    $\emu_z$, estimators of the form 
    $\emu_z := \sum_i \beta_i \varphi(x_i)$ with coefficients $\sum_i \abs{\beta_i} = 1$ 
    naturally satisfy the bound.}
    All following bounds below will be understood in the $\Pr$-a.e.\ sense for the remainder of
    this proof.
    
    We now successively insert appropriate zero-sum terms into the total error 
    and apply the triangle inequality multiple times to obtain the worst-case estimation error.
    We have the overall decomposition        
    \begin{align}
        \norm{U_\lag^{(\gamma,n)} \emu_z - U_\lag\mu_z}_\inrkhs \leq
        \underbrace{
            \norm{U_\lag^{(\gamma,n)} \emu_z -U_\lag^{(\gamma,n)} \mu_z}_\inrkhs
        }_{(I)}
        + 
        \underbrace{
            \norm{U_\lag^{(\gamma,n)} \mu_z - U_\lag\mu_z}_\inrkhs
        }_{(II)}.
    \end{align}
    For these two error components, we get the individual bounds
    \begin{align*}
    (I) \leq \norm{C_n(\lag)} 
    \norm{(C_n(0) + \gamma \id_\inrkhs)^{-1}(\emu_z - \mu_z) }_\inrkhs
    \leq \frac{c}{\gamma}  
    \norm{\emu_z - \mu_z}_\inrkhs
    \end{align*}
    as well as
    \begin{align*}
    (II) 
    &\leq 
    \underbrace{
        \norm{
            C_n(\lag) (C_n(0) + \gamma \id[\inrkhs])^{-1} \mu_z
            -  C_n(\lag)  (C(0) + \gamma \id[\inrkhs])^{-1} \mu_z
        }_\inrkhs
    }_{(\star)}
    \\ &+ 
    \underbrace{
        \norm{
            C_n(\lag) (C(0) + \gamma \id[\inrkhs])^{-1} \mu_z
            - C(\lag) C(0)^\dagger \mu_z
        }_\inrkhs
    }_{(\star \star)}.
    \end{align*}
     For $(\star)$, we give a bound by 
     \begin{align*}
         (\star) &\leq 
     \norm{C_n(\lag)}  \norm{(C_n(0) + \gamma \id[\inrkhs])^{-1}
         - ( C(0) + \gamma \id[\inrkhs])^{-1} } \norm{\mu_z}_\inrkhs \\
                 &\leq \frac{c^{3/2}}{\gamma^2} \norm{C_n(0) - C(0)},
     \end{align*}
     where we use the identity 
     $A^{-1} - B^{-1} = A^{-1} (B - A) B^{-1}$ for
     invertible operators $A$ and $B$.
     To obtain a bound for $(\star \star)$, we again insert a zero-sum term:
    \begin{align*}
    (\star \star) &\leq \norm{C_n(\lag) (C(0) + \gamma \idop[\inrkhs])^{-1}\mu_z
        - C(\lag) ( C(0) + \gamma \idop[\inrkhs])^{-1}\mu_z }_\inrkhs 
    \\&{\phantom{=}} +
    \norm{C(\lag)(C(0) + \gamma \idop[\inrkhs])^{-1} \mu_z
        - C(\lag)C(0)^\dagger \mu_z}_\inrkhs
    \\&\leq
    \norm{C_n(\lag) - C(\lag) }
    \norm{(C(0) + \gamma \idop[\inrkhs])^{-1}} \norm{\mu_z }_\inrkhs
    \\&{\phantom{=}} +
    \norm{C(\lag)} \norm{(C(0) + \gamma \idop[\inrkhs])^{-1} \mu_z
        - C(0)^\dagger \mu_z}_\inrkhs
    \\
    & \leq \frac{c^{1/2}}{\gamma} \norm{C_n(\lag) - C(\lag) } + c\norm{(C(0) + \gamma \idop[\inrkhs])^{-1} \mu_z
        - C(0)^\dagger \mu_z}_\inrkhs.
    \end{align*} 
    The sum of the bounds $(I)$, $(\star)$, and $(\star \star)$ yields the total bound as given
    in~\eqref{eq:kernel_sum_rule_error} after rearranging.
    \QED

\bibliography{library}

\end{document}